\theoremstyle{plain}
\newtheorem{Theorem}{Thm}[section]
\newtheorem{Thm}[Theorem]{Theorem}
\newtheorem{Lem}[Theorem]{Lemma}
\newtheorem{Cor}[Theorem]{Corollary}
\newtheorem{Prop}[Theorem]{Proposition}
\newtheorem*{Thm*}{Theorem}
\theoremstyle{definition}
\newtheorem{Def}[Theorem]{Definition}
\newtheorem{Rem}[Theorem]{Remark}
\newtheorem{Exm}[Theorem]{Example}
\newcommand\mbb{\mathbb}
\newcommand\mcal{\mathcal}
\newcommand\A{\mbb{A}}
\newcommand\C{\mbb{C}}
\newcommand\F{\mbb{F}}
\newcommand\N{\mbb{N}}
\renewcommand\P{\mbb{P}}
\newcommand\R{\mbb{R}}
\newcommand\V{\mcal{V}}
\newcommand\fL{\mathfrak{L}}
\newcommand\DP[1][n]{(\P^{#1})^\ast}
\newcommand{\ratto}{\dashrightarrow}
\newcommand\ol{\overline}
\newcommand\wh{\widehat}
\newcommand\wt{\widetilde}
\newcommand\sH{\mathscr{H}}
\newcommand\RP[1][n]{\P^{#1}(\R)}
\DeclareMathOperator\dd{D}
\DeclareMathOperator\codim{codim}
\DeclareMathOperator\rrk{rank}
\renewenvironment{proof}[1][\proofname]{\par
\pushQED{\qed}%
\normalfont \topsep6\p@\@plus6\p@\relax
\trivlist
\item[\hskip\labelsep
\scshape
#1\@addpunct{.}]\ignorespaces
}{%
\popQED\endtrivlist\@endpefalse
}
\begin{document}
\title{Real Rank with Respect to Varieties}
\author{Grigoriy Blekherman}
\author{Rainer Sinn}

\subjclass[2010]{15A69, 15A21}
\keywords{rank with respect to variety, tensor, real rank, maximal rank, typical rank}
\maketitle

\begin{abstract}
We study the real rank of points with respect to a real variety $X$. This is a generalization of various tensor ranks, where $X$ is in a specific family of real varieties like Veronese or Segre varieties. The maximal real rank can be bounded in terms of the codimension of $X$ only. We show constructively that there exist varieties $X$ for which this bound is tight. The same varieties provide examples where a previous bound of Blekherman-Teitler on the maximal $X$-rank is tight. We also give examples of varieties $X$ for which the gap between maximal complex and the maximal real rank is arbitrarily large. To facilitate our constructions we prove a conjecture of Reznick on the maximal real symmetric rank of symmetric bivariate tensors. Finally we study the geometry of the set of points of maximal real rank in the case of real plane curves.
\end{abstract}

\section*{Introduction} 
The problem of decomposing a vector as a linear combination of simple vectors is central in many areas of applied mathematics, machine learning, and engineering. The length of the shortest decomposition is usually called the \textbf{rank} of the vector.

We consider the situation where the simple vectors form an \textbf{algebraic variety}. This includes well-studied and important cases such as tensor rank (real or complex), which is the rank with respect to the Segre variety, symmetric tensor rank (or Waring rank), which is the rank with respect to the Veronese variety, and anti-symmetric tensor rank, which is the rank with respect to the Grassmannian in its Pl\"ucker embedding.

We will be mostly interested in the rank over the real numbers. However, we make the definition over an arbitrary field $\F$. Let $X\subset \P^n$ be a projective variety defined over a field $\F$ 
and let $\hat{X} \subset \A^{n+1}$ be the affine cone over $X$. The variety $X$ is called nondegenerate if $X$ (or equivalently $\hat{X}$) is not contained in any hyperplane. In this case, for any 
$p\in\F^{n+1}$, $p\neq 0$, we can define the {\bf rank} of $p$ {\bf with respect to} $X$ ($X$-rank for short) as follows:
\[
  \operatorname{rank}_X(p) = \min r
  \quad
  \text{such that}
  \quad
  p=\sum_{i=1}^r x_i \quad \text{with} \quad x_1,\ldots,x_r \in \hat{X}(\F),
\]
i.e.~the rank of $p$ with respect to $X$ is the smallest length of an additive decomposition of $p$ into points of $\hat{X}$ with coordinates in $\F$. 

A rank $r$ is called \textbf{generic} if the vectors of $X$-rank $r$ contain
a Zariski open subset of $\A^{n+1}$.
Over any algebraically closed field, there is a unique generic $X$-rank for a nondegenerate variety $X$.

Over the real numbers, a rank $r$ is called \textbf{typical} if the set of vectors of $X$-rank $r$ contains an open subset
of $\R^{n+1}$ with respect to the Euclidean topology.
There can be many typical ranks for a given variety $X$. 

In this paper, we study upper bounds for the maximal real rank and the range between the minimal typical rank and the maximal real rank in terms of the variety $X$ which we allow to vary. We now outline our results:

Let $r_0$ and $r_{\max}$ denote the minimal typical rank and the maximal $X$-rank respectively. A trivial upper bound on the rank of a point is simply the dimension of the ambient vector space, which works over any field $\F$:
\begin{equation}\label{eqn:triv}
r_{\max} \leq n+1.
\end{equation}

In \cite{LandsbergTeitlerMR2628829}, Landsberg and Teitler showed that over the complex numbers (or any algebraically closed field) this bound can be improved:
\begin{equation}\label{eqn:LT}
r_{\max} \leq n-\dim X+1=\codim X+1.
\end{equation}

In \cite{BleTei}, Blekherman and Teitler showed that for real varieties
\begin{equation}\label{eqn:BT}
r_{\max} \leq 2r_0,
\end{equation}
and also 
\[
r_0=r_{\mathrm{gen}},
\]
where $r_{\mathrm{gen}}$ is the generic rank with respect to $X$ over $\C$. 

The real analogue of the Landsberg-Teitler bound is
\begin{equation} \label{eqn:bound}
r_{\max} \leq n-\dim X+2=\codim X+2.
\end{equation}
This bound can be established by adapting their proof to the real case. The proof is by a short inductive argument, so we included it below, see Theorem \ref{Thm:UpperBound}. This was already observed by Ballico, see \cite{BallMR2771116}, although his proof is terse. We give a full argument for completeness. In case that $\codim(X) + \deg(X)$ is odd, Ballico improves upon this bound and shows that $r_{\max} \leq \codim(X) + 1$.

Note that in most cases, the bound \eqref{eqn:BT} will be stronger than \eqref{eqn:bound}, unless the secant varieties of $X$ are strongly defective. However, when $X$ is a hypersurface, the Blekherman-Teitler bound is $4$, while the bound \eqref{eqn:bound} is $3$. When $X$ is a curve, its secant varieties are known to be non-defective, see e.g.~Lange \cite{LangeMR744323}, and the bound in \eqref{eqn:BT} is $n+2$ when $n$ is even, and $n+1$ when $n$ is odd. This is worse than the bound \eqref{eqn:bound}, which agrees with the trivial bound \eqref{eqn:triv}. 

We establish the tightness of the bound \eqref{eqn:bound} by constructing examples of irreducible varieties $X\subset \P^n$ of any dimension and points of maximal real $X$-rank $\codim(X)+2$. We give two constructions of such varieties, one in Section \ref{sec:bounds} and one in Section \ref{sec:Waring}. We also find examples, where this maximal real rank is typical, see Theorem \ref{Thm:tight} and Corollary \ref{Cor:tight}. The rational normal curve establishes the tightness of Ballico's improvement in the case of $\codim(X) + \deg(X)$ being odd mentioned above, see Remark \ref{Rem:Ballicobound}.

Our second contribution is a proof of a conjecture of Reznick \cite[Conjecture 4.12]{BruceMR3156559} which states that a bivariate form of degree $d$ has maximal real rank $d$ if and only if it has only real roots, see Theorem \ref{Thm:hyperbolic}. Under the additional assumption that the form has distinct roots, this conjecture was established in previous work of Causa-Re, Comon-Ottaviani, and Reznick, see \cite{CausaRe, CO, BruceMR3156559}. We provide an inductive argument which does not rely on the distinctness of the roots.

We use this characterization of binary forms of maximal real rank to study the gap between minimal typical rank and maximal real rank. In Theorem \ref{Thm:maxtypicalrank}, we find curves $X$ with the property that the maximal real rank is twice the generic $X$-rank, which is the maximal range by the Blekherman-Teitler bound \eqref{eqn:BT}. 

Our fourth contribution is that the gap between the maximal real rank and the maximal complex rank with respect to a variety $X$ can be arbitrarily large, see Corollary \ref{cor:gapcomplexrank}. We are not aware of any instances where this difference had been observed to be larger than $1$. 

The proof of Reznick's conjecture gives us one of the few instances where the set of points of maximal real rank is fully understood. Our final contribution is a study of the set of points of maximal real rank for the case of plane curves. We find that its geometry is governed by tangent lines with odd intersection multiplicity, see Theorem \ref{Thm:PlaneCurves}.

\subsection*{Acknowledgements}

We would like to thank Alessandra Bernardi, Jaros\l aw Buczy\'{n}ski, Hwangrae Lee and Giorgio Ottaviani for their comments on a draft of this paper. Both authors were partially supported by NSF grant DMS-1352073. The first author was partially supported by the Simons Institute for the Theory of Computing during the Algorithms and Complexity in Algebraic Geometry Semester.




\section{Bounds on the Real Rank}\label{sec:bounds}
In this section, let $X\subset\P^n$ be an irreducible nondegenerate variety with a smooth real point. This implies that the set of real points of $X$ is Zariski-dense in $X$, see \cite[Section 2.8]{BCRMR1659509}. We give an upper bound on the real rank of any point in $\RP$ in terms of the codimension of $X$ and establish its tightness via an explicit construction.

The following theorem is proved by adapting the Landsberg-Teitler proof to the real case. This was also observed by Ballico, see \cite[Theorem 1(i)]{BallMR2771116}.
\begin{Thm}\label{Thm:UpperBound}
The real rank of any point $p\in\RP$ with respect to $X$ is at most
\[
\rrk_X(p)\leq n-\dim(X)+2 = \codim(X)+2.
\]
\end{Thm}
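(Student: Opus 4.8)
The plan is to induct on $\dim X$, using general hyperplane sections through $p$ to cut $X$ down to a curve while keeping both the point $p$ and the codimension fixed, and then to invoke the trivial bound \eqref{eqn:triv} in the curve case. The observation that makes this work over $\R$ is that, for a nondegenerate curve $C\subset\P^N$, the quantity $\codim(C)+2 = N+1$ is \emph{exactly} the trivial bound: any $N+1$ real points of $C$ in linearly general position lift to a real basis of $\R^{N+1}$, so every real point is a real linear combination of them. Since $C$ has a smooth real point, $C(\R)$ is Zariski-dense, and such a spanning configuration exists because "being in linearly general position" is a nonempty Zariski-open condition that is met by the dense real locus. This settles the base case $\dim X = 1$, and it is precisely here that the real bound loses $1$ compared to the complex Landsberg--Teitler bound.

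For the inductive step, I assume $\dim X = d \ge 2$ together with the bound for all varieties of smaller dimension. Given $p\in\RP$, I may assume $p\notin X$, since otherwise $\rrk_X(p)=1$. I would then choose a general \emph{real} hyperplane $H\cong\P^{n-1}$ through $p$ and set $X' = X\cap H$. Because the linear system of hyperplanes through $p$ is base-point-free on $X$ (as $p\notin X$), Bertini's theorem guarantees that the general such $X'$ is irreducible and nondegenerate in $H$, of dimension $d-1$ and codimension $c=\codim(X)$ in $H$. Since $\hat{X'}\subseteq\hat X$, every decomposition of $p$ using points of $\hat{X'}$ is also a decomposition using points of $\hat X$, so $\rrk_X(p)\le\rrk_{X'}(p)$; applying the inductive hypothesis to $X'\subset\P^{n-1}$ gives $\rrk_{X'}(p)\le\codim_H(X')+2 = c+2$, which is the desired bound.

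The main obstacle is the real-specific bookkeeping in the inductive step: I must ensure that the general section $X'$, besides being irreducible and nondegenerate, still possesses a \emph{smooth real point}, so that the inductive hypothesis genuinely applies. This is exactly the point where the argument departs from the complex setting, in which Bertini alone suffices. Since $d\ge 2$, near a smooth real point of $X$ the real locus $X(\R)$ is a smooth manifold of dimension $d\ge 2$, and a general real hyperplane through $p$ meets it transversally in a nonempty manifold of dimension $d-1\ge 1$, whose points are smooth on $X'$; this is what produces the required smooth real point. A further routine check is that restricting to hyperplanes through the single point $p$ (a hyperplane $p^\ast$ in the dual space) does not force the section into the proper closed locus of degenerate or reducible sections, which for $p\notin X$ causes no trouble. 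Once these genericity statements are verified over $\R$, the induction closes and the bound $\rrk_X(p)\le\codim(X)+2$ follows.
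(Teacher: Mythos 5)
Your overall strategy is the same as the paper's: induction on $\dim X$, with curves as the base case (where $\codim(X)+2=n+1$ is the trivial bound, valid over $\R$ because the Zariski-density of the real points of a nondegenerate curve yields $n+1$ spanning real points), and an inductive step that cuts $X$ by a real hyperplane through $p$, which must stay irreducible and nondegenerate and retain a smooth real point; the monotonicity $\rrk_X(p)\le\rrk_{X\cap H}(p)$ then closes the induction. Your base case and this reduction are correct.

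The genuine soft spot is the assertion that ``a general real hyperplane through $p$ meets $X(\R)$ transversally in a nonempty manifold of dimension $d-1$.'' This is false as stated: a general real hyperplane through $p$ can miss $X(\R)$ entirely. For instance, take the quadric $X=\V(x_1^2+x_2^2+x_3^2-\epsilon x_0^2)\subset\P^3$, whose real locus is a small sphere, and a real point $p$ far from it; the planes through $p$ avoiding the sphere form a nonempty Euclidean-open set, and for every such plane $H$ the section $X\cap H$ has no real points at all, so your inductive hypothesis cannot be applied to it. What is true, and what the argument needs, is weaker but suffices: the real hyperplanes through $p$ that meet $X(\R)$ transversally at a smooth point form a nonempty Euclidean-\emph{open} (not dense) subset of the space of hyperplanes through $p$, while the Zariski-closed locus of hyperplanes whose sections are reducible or degenerate has nowhere-dense real points; hence the open set of transversal hyperplanes cannot be contained in it, and a single hyperplane satisfying all conditions exists. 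This intersection argument (or the paper's version of it, which picks a suitable connected component of the complement of $X^\ast\cap p^\perp$ in the space of hyperplanes through $p$) is exactly the real-specific step, and it is missing from your write-up. A secondary, more cosmetic issue: nondegeneracy of the general section is not part of Bertini's theorem, which you may legitimately invoke only for irreducibility (the projection from $p\notin X$ has image of dimension $d\ge 2$); nondegeneracy needs its own short argument, e.g.\ the paper's observation that if $X\cap H\subset\V(\ell,\ell')$ then $\ell'/\ell$ is a regular, hence constant, function on $X$, forcing $X\subset\V(\ell')$, contradicting nondegeneracy of $X$.
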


\begin{proof}
The proof is by induction on the dimension of $X$.
If $\dim(X) = 1$, we have $\codim(X)+2 = n+1$, which is a trivial bound on the $X$-rank of any point in $\P^n$. Now suppose $\dim(X) = d \geq 2$ and let $p\in\P^n$ be a point, $p\notin X$. Let $H\subset \P^n$ be a generic hyperplane containing $p$. We first show that $X\cap H$ is irreducible and nondegenerate in $H$.

To show that $X\cap H$ is irreducible, we follow the argument in Griffiths-Harris \cite{GriffithsHarrisMR1288523} on page 174: Let $x\in X$ be a general point and $L\subset \P^n$ be a linear space of dimension $n-2$ which contains $p$ and intersects $X$ transversally in $x$. Let $Z$ be the unique irreducible component of $X\cap L$ containing $x$. We consider the pencil $\{H_\lambda\}\subset \DP$ of hyperplanes that contain $L$. Then $Z\subset H_\lambda \cap X$ for all $\lambda$ and $x$ is contained in a unique irreducible component $Z_\lambda$ of $H_\lambda\cap X$, because $H_\lambda$ and $X$ also intersect transversally in $x$. Now set $X' = \bigcup_\lambda Z_\lambda$. Then $\dim(X') = \dim(X)$ and therefore $X'$ is Zariski-dense in $X$. Since $H_\lambda \cap X = H_\lambda\cap X'$ for generic $\lambda$, we conclude $H_\lambda \cap X = Z_\lambda$ is irreducible for generic $\lambda$. 

To show that $X\cap H$ is nondegenerate, we follow Landsberg-Teitler \cite[Proposition 5.1]{LandsbergTeitlerMR2628829}: Suppose $H$ intersects $X$ transversally and let $H = \V(\ell)\subset \P^n$. If $X\cap H$ were contained in a hyperplane in $H$, say $X\cap H\subset \V(\ell,\ell')$, then $\ell'/\ell$ is a regular function on $X$ because it has no poles (whenever $\ell(x) = 0$ for a point $x\in X$, we have $\ell'(x) = 0$). Since $X$ is projective, $\ell'/\ell$ is constant on $X$, which means $\ell'/\ell = 0$ on $X$. This implies that $X\subset\V(\ell')$ is degenerate, which contradicts our assumptions on $X$. 

So, given a real point $p\in\P^n(\R)$, we can find a real hyperplane $H\subset\P^n$ containing $p$ such that $X\cap H$ is irreducible and nondegenerate. We can further assume that $X\cap H$ has a regular real point, because this is true for hyperplanes in at least one connected component of the complement of $X^\ast\cap p^\perp\subset(\P^n)^\ast$, the intersection of the dual variety of $X$ with the hyperplane of all hyperplanes containing $p$. So by induction, the real rank of $p$ with respect to $X\cap H$ is bounded by
\[
\rrk_{X\cap H}(p) \leq \codim_H(X\cap H) + 2 = \codim(X)+2. \qedhere
\]
\end{proof}

\begin{Rem}
The above theorem remains true if we substitute the assumption that $X$ is irreducible by the assumption that for every irreducible component of $X$, the real points are dense in it.
\end{Rem}

Our next goal is to establish the tightness of this bound by an explicit construction.
We make some observations of on the behavior of real rank under projections and joins of varieties:

\begin{Prop}\label{Prop:projections}
Let $\pi\colon \P^m \ratto \P^n$ be a linear projection with center $L$ defined over $\R$. Let $X\subset \P^m$ be an irreducible nondegenerate variety and suppose $\ol{X} = \pi(X) \subset \P^n$ is a proper subvariety. Let $\ol{p}\in\RP$ be a real point. Then 
\[
\rrk_{\ol{X}}(\ol{p}) \leq \min\{ \rrk_X(p) \colon p\in\RP[m], \; \pi(p) = \ol{p}\}.
\]
If for every real point $\ol{x}\in \ol{X}(\R)$ there is a real point $x\in X(\R)$ such that $\pi(x) = \ol{x}$, then the above inequality is an equality.
\end{Prop}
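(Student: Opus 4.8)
The plan is to reduce everything to linear algebra on the affine cones. A linear projection $\pi\colon\P^m\ratto\P^n$ with real center $L$ is induced by a surjective $\R$-linear map $\hat\pi\colon\R^{m+1}\to\R^{n+1}$ whose kernel is the cone over $L$; the cone $\hat{\ol X}$ over $\ol X=\pi(X)$ is the Zariski closure of $\hat\pi(\hat X)$, so in particular every nonzero vector of the form $\hat\pi(x)$ with $x\in\hat X(\R)$ lies in $\hat{\ol X}(\R)$. The whole proof rests on the elementary fact that a linear map carries an additive decomposition to an additive decomposition.

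First I would prove the inequality. Fix any real point $p$ with $\pi(p)=\ol p$ and write an optimal real decomposition $\hat p=\sum_{i=1}^r x_i$ with $x_i\in\hat X(\R)$ and $r=\rrk_X(p)$. Applying $\hat\pi$ gives $\hat{\ol p}=\hat\pi(\hat p)=\sum_{i=1}^r\hat\pi(x_i)$, where each summand $\hat\pi(x_i)$ is either $0$ (precisely when $x_i$ lies in the cone over $L$) or a point of $\hat{\ol X}(\R)$. Discarding the zero summands can only shorten the sum, and at least one nonzero summand survives because $\hat{\ol p}\neq 0$ (here one uses that $\pi(p)=\ol p$ is defined, i.e.\ $p\notin L$). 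Hence $\rrk_{\ol X}(\ol p)\leq r=\rrk_X(p)$, and taking the minimum over all such $p$ yields the stated bound.

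For the equality under the lifting hypothesis, it suffices to exhibit a single real preimage $p_0$ of $\ol p$ with $\rrk_X(p_0)\leq\rrk_{\ol X}(\ol p)$. Take an optimal real decomposition $\hat{\ol p}=\sum_{j=1}^s\ol y_j$ with $\ol y_j\in\hat{\ol X}(\R)$ and $s=\rrk_{\ol X}(\ol p)$. Each $\ol y_j$ represents a real point $[\ol y_j]\in\ol X(\R)$, which by hypothesis lifts to a real point $x_j\in X(\R)$ with $\pi(x_j)=[\ol y_j]$. Choosing any cone representative $\hat x_j\in\hat X(\R)$, the vectors $\hat\pi(\hat x_j)$ and $\ol y_j$ are nonzero real multiples of one another, so I rescale $\hat x_j$ by the appropriate real scalar (using that $\hat X(\R)$ is closed under real scaling) to arrange $\hat\pi(\hat x_j)=\ol y_j$ exactly. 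Setting $\hat{p_0}=\sum_{j=1}^s\hat x_j$ then gives $\hat\pi(\hat{p_0})=\sum_{j=1}^s\ol y_j=\hat{\ol p}$, so $\pi(p_0)=\ol p$ and $\rrk_X(p_0)\leq s$, as required.

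I expect the only genuinely delicate points to be bookkeeping rather than substance: in the inequality, correctly discarding the summands that fall into the center of projection, and in the equality, the exact rescaling step, which is where realness is used in an essential way—over $\R$ (a field) every nonzero scalar is invertible and the affine cone is stable under scalar multiplication, so the image of a lifted cone point can be matched to the prescribed summand on the nose. Everything else is a direct transport of additive decompositions across the linear map $\hat\pi$.
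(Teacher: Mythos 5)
Your proof is correct and follows essentially the same route as the paper's: for the inequality you project a decomposition and discard the summands landing in the center, and for the equality you lift an optimal decomposition using the hypothesized real preimages. The only difference is presentational---you work explicitly on the affine cones and spell out the rescaling of the lifted representatives so that $\hat\pi(\hat x_j)=\ol y_j$ exactly, a bookkeeping point the paper's proof leaves implicit.
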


\begin{proof}
Let $p\in\RP[m]\setminus L$ be a point and $p = \sum_{i = 1}^r x_i$ with $x_i\in X(\R)$. Then $\pi(p) = \sum_{i=1}^{r'} \pi(x_i)$ and $\pi(x_i) \in \ol{X}(\R)$, where we assume, after reordering, that $x_1,\ldots,x_{r'}\in X(\R)\setminus L$ and $x_{r'+1},\ldots,x_r\in X(\R)\cap L$. Note that $r'\geq 1$. So the rank of $\pi(p)$ is at most $r'\leq r$, which shows the claimed inequality.
Now suppose that every real point $\ol{x}\in \ol{X}(\R)$ has a real preimage. Then, given $\ol{p} = \sum_{i=1}^r \ol{x_i}$ with $\ol{x_i}\in \ol{X}(\R)$, we set $p = \sum_{i=1}^r x_i$, where the $x_i\in X(\R)\setminus L$ are real preimages of $\ol{x_i}$. Then the real $X$-rank of $p$ is at most $r$ and $\pi(p) = \ol{p}$. This gives the reverse inequality.
\end{proof}

\begin{Prop}\label{Prop:joins}
Given two irreducible, nondegenerate real varieties $X\subset \P(V)$ and $Y\subset \P(W)$, we form the join $J(X,Y) \subset \P(V\oplus W)$. The real $J(X,Y)$-rank of any point $p = (x:y) \in \P(V\oplus W)$ is
\[
\rrk_{J(X,Y)} (p) = \max\{\rrk_X(x), \rrk_Y(y)\},
\]
the maximum of the real ranks of the projections of $p$ onto $\P(V)$ and $\P(W)$ with respect to $X$ and $Y$, respectively.
\end{Prop}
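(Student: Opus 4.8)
The plan is to reduce the statement about $J(X,Y)$-rank to the separate ranks of $x$ and $y$ by first identifying the affine cone over the join. Since $X\subset\P(V)$ and $Y\subset\P(W)$ live in the complementary linear subspaces $\P(V)$ and $\P(W)$ of $\P(V\oplus W)$, every point of $X$ is distinct from every point of $Y$, so each pair is genuinely joined by a line; the affine cone of the line spanned by $a\in\hat{X}$ and $b\in\hat{Y}$ is $\{(\lambda a,\mu b):\lambda,\mu\in\R\}$. Because $\hat{X}$ and $\hat{Y}$ are cones, taking the union over all such lines yields exactly
\[
\wh{J(X,Y)} = \{(a,b)\in V\oplus W : a\in\hat{X},\ b\in\hat{Y}\},
\]
which is already Zariski closed (it is a product of two closed cones), so no closure is needed. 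I would record this identification as the first step, since everything else follows formally from it, and in particular a point is a real point of $\wh{J(X,Y)}$ exactly when both of its components are real points of the respective cones.

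For the inequality $\rrk_{J(X,Y)}(p)\geq\max\{\rrk_X(x),\rrk_Y(y)\}$, I would take a shortest real decomposition $(x,y)=\sum_{i=1}^r (a_i,b_i)$ with $(a_i,b_i)\in\wh{J(X,Y)}(\R)$, so $a_i\in\hat{X}(\R)$ and $b_i\in\hat{Y}(\R)$. Reading off the two components separately gives $x=\sum_{i=1}^r a_i$ and $y=\sum_{i=1}^r b_i$, which are real $X$- and $Y$-decompositions of length $r$; hence both $\rrk_X(x)$ and $\rrk_Y(y)$ are at most $r=\rrk_{J(X,Y)}(p)$.

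For the reverse inequality I would set $m=\max\{\rrk_X(x),\rrk_Y(y)\}$ and assemble a decomposition of $(x,y)$ of length $m$. Writing $x=\sum_{i}a_i$ and $y=\sum_{i}b_i$ as shortest real $X$- and $Y$-decompositions, and then padding the shorter of the two lists with copies of the origin (which lies in both $\hat{X}$ and $\hat{Y}$), I obtain $x=\sum_{i=1}^m a_i$ and $y=\sum_{i=1}^m b_i$, and therefore $(x,y)=\sum_{i=1}^m (a_i,b_i)$ with each summand $(a_i,b_i)\in\wh{J(X,Y)}(\R)$ by the cone identification. This gives $\rrk_{J(X,Y)}(p)\leq m$ and, combined with the previous paragraph, the claimed equality.

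This lemma is elementary once the cone is identified, so the only point where I would be careful is the padding step in the reverse inequality: it relies on the fact that a decomposition can always be lengthened by adjoining zero summands, which is exactly what lets the two independent decompositions of $x$ and $y$ be synchronized to a common length $m$ before being paired into join summands. The degenerate cases $x=0$ or $y=0$, where $p$ lies on $Y$ or $X$, are absorbed by the convention $\rrk_X(0)=0$ and cause no difficulty.
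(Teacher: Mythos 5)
Your proof is correct and takes essentially the same route as the paper's: the upper bound is the identical pad-with-zeros pairing argument (the paper's rescaling by $2$ and $\tfrac12$ is just its way of exhibiting each paired summand as a point on a line of the join, which your explicit identification $\wh{J(X,Y)} = \hat{X}\times\hat{Y}$ handles more cleanly). The only difference is in the lower bound, where the paper cites its Proposition \ref{Prop:projections} on projections while you read off the two components of a minimal decomposition directly---the same argument, unwound.
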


\begin{proof}
The inequality $\rrk_{J(X,Y)}(x:y)\geq \max\{\rrk_X(x),\rrk_Y(y)\}$ follows from Proposition \ref{Prop:projections} because the projection of $J(X,Y)$ onto $\P(V)$ is $X$ and the projection of $J(X,Y)$ onto $\P(W)$ is $Y$.

Conversely, given decompositions of $x = \sum_{i=1}^r x_i\in\P(V)$ and $y = \sum_{j=1}^s y_j\in\P(W)$ with $x_i\in X(\R)$ and $y_j\in Y(\R)$, we get a decomposition of $p = (x:y)\in \P(V\oplus W)$ by writing
\[
p = (x:y) = \sum_{i = 1}^r (x_i:y_i) = \sum_{i=1}^r 2 (\frac12 x_i: \frac12 y_i),
\]
where we assumed without loss of generality that $r\geq s$ and set $y_j = 0$ for $j>s$. Note that $(\frac12 x_i: \frac12 y_i)$ is a real point of $J(X,Y)$ because it lies on the line between $(x_i:0)\in X$ and $(0:y_i)\in Y$. So the real $J(X,Y)$-rank of $p$ is at most the maximum of the real $X$-rank of $x$ and the real $Y$-rank of $y$.
\end{proof}

We also need the following criterion certifying that a point has maximal real rank with respect to $X$.
\begin{Prop}\label{Prop:rankchar}
Let $p\in\P^n(\R)$ and $c = \codim(X)$. Assume that every real linear space $L \subset \P^n$ of dimension $c$ and containing $p$ intersects $X$ in at most $c$ real points. Then the real $X$-rank of $p$ is $c+2$, i.e.~maximal.
\end{Prop}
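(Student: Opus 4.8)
The plan is to combine the upper bound from Theorem \ref{Thm:UpperBound}, which already gives $\rrk_X(p)\le c+2$, with a matching lower bound, so that it suffices to prove $\rrk_X(p)\ge c+2$; equivalently, that $p$ admits no decomposition into $c+1$ or fewer real points of $X$. For this I would use the reformulation of real rank in terms of projective spans: since $\hat X(\R)$ is a cone (closed under real scaling), $\rrk_X(p)\le r$ holds if and only if the point $[p]\in\RP$ lies in the projective span of $r$ real points of $X$.

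Assume for contradiction that $r:=\rrk_X(p)\le c+1$ and fix a shortest decomposition $p=\sum_{i=1}^r x_i$ with $x_i\in\hat X(\R)$. First I would observe that minimality forces $x_1,\dots,x_r$ to be linearly independent: any nontrivial relation $\sum_i\mu_i x_i=0$ allows one to rewrite $p=\sum_i(1-t\mu_i)x_i$ and choose $t$ so that one coefficient vanishes, producing a shorter decomposition (each $(1-t\mu_i)x_i$ still lies on $\hat X(\R)$ because $\hat X$ is a cone). Consequently the projective points $[x_1],\dots,[x_r]$ are distinct and span a real linear space $M$ of dimension exactly $r-1\le c$ with $[p]\in M$.

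The second step is an augmentation argument turning $M$ into a $c$-dimensional linear space that violates the hypothesis. As long as the span under consideration is a proper linear subspace $S\subsetneq\P^n$, nondegeneracy of $X$ gives $X\not\subseteq S$, so $X\setminus S$ is a nonempty Zariski-open subset of $X$; since $X$ has a smooth real point, its real points are Zariski-dense, and hence $X(\R)\setminus S\neq\varnothing$. Starting from $M$ (of dimension $r-1$) I would repeatedly adjoin such real points of $X$ lying outside the current span, each raising the dimension by exactly one, until reaching a real linear space $L\supseteq M$ of dimension exactly $c$. By construction $L$ contains $p$ as well as the $r$ points $[x_i]$ together with the $c+1-r$ adjoined real points of $X$; since each adjoined point was taken outside the previous span, all $c+1$ of these real points of $X$ are projectively independent, in particular distinct. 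Thus $L$ is a $c$-dimensional real linear space through $p$ meeting $X$ in at least $c+1$ real points, contradicting the hypothesis.

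The one genuinely delicate case is $r\le c$, where the span $M$ of the minimal decomposition is too small to contradict the hypothesis directly; this is exactly what the augmentation step repairs. Its correctness rests on the two structural facts used above: nondegeneracy of $X$, which guarantees that $X$ is not contained in any proper linear subspace, and Zariski-density of the real points of $X$, which guarantees that a point of $X$ outside a given proper subspace can be chosen to be real. Once a contradiction is obtained for every $r\le c+1$, the lower bound $\rrk_X(p)\ge c+2$ follows, and together with Theorem \ref{Thm:UpperBound} this yields $\rrk_X(p)=c+2$.
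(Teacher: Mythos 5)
Your proposal is correct and takes essentially the same route as the paper: the paper also proves the lower bound by taking a shortest real decomposition of $p$ and adjoining additional general real points of $X$ to produce a $c$-dimensional real linear space through $p$ containing at least $c+1$ real points of $X$, contradicting the hypothesis. The only differences are cosmetic---you argue by contradiction rather than by contrapositive, and you spell out the linear independence of a minimal decomposition and the step-by-step augmentation (via nondegeneracy and Zariski-density of real points), details the paper compresses into the phrase ``adding general real points.''
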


\begin{proof}
We prove the contrapositive: Let $r = \rrk_X(p)$ with $r\leq \codim(X)+1$ and set $c = \codim(X)$. Then $p$ lies in the linear span of $r$ real points $x_1,\ldots,x_r$ on $X$, i.e.
\[
p \in \langle x_1,x_2,\ldots,x_r\rangle.
\]
By adding general real points $x_{r+1},\ldots,x_{c+1}$, if necessary, we find a linear space $L = \langle x_1,x_2,\ldots,x_{c+1}\rangle$ containing $p$ such that $\dim(L) =  c$ and $|L\cap X(\R)| \geq c+1$.
\end{proof}

\begin{Rem}
The preceding proposition reads as follows in case that $X$ is a curve: If every hyperplane through $p$ intersects $X$ in at most $n-1$ real points, the real $X$-rank of $p$ is $n+1$.
\end{Rem}
Our general idea is to prove that a real point $p\in \P^n(\R)$, $p \notin X$, has maximal real rank $\codim(X)+2$ with respect to $X$ by considering the projection $\pi_p$ of $X$ from $p$ and using Proposition \ref{Prop:rankchar}. 
The image of the variety $X\subset \P(V)$ under $\pi_p$ is closed because $p\notin X$. We denote it by $\ol{X} = \pi_p(X)$. The assumption of Proposition \ref{Prop:rankchar} that every linear space $L\subset \P(V)$ containing $p$ intersects $X$ in at most $\codim(X)$ many real points translates by projection to the assumption that every linear space $\ol{L}\subset \P(V/p)$ of dimension $\codim(\ol{X})$ intersects $\ol{X}$ in at most $\codim(X) = \codim(\ol{X})+1$ many real points. This condition can be realized by curves of minimal degree, which are exactly the rational normal curves.

The condition that the intersection with a linear subspace of complimentary dimension has at most $\codim \ol{X}+1$ points is closely related to the variety $\ol{X}$ being \textit{of minimal degree}. However, varieties of minimal degree of higher dimension, except for irreducible quadric hypersurfaces, do not satisfy the condition that the intersection with any linear subspaces of complimentary dimension is $0$-dimensional. 
See \cite[Theorem 1]{EisenbudHarrisMR927946} for a classification of varieties of minimal degree.

We now prove that the upper bound in Theorem \ref{Thm:UpperBound} is tight, first in the case of curves. 
\begin{Thm}\label{Thm:tight}
Let $n\geq 2$.
There is an irreducible nondegenerate curve $X\subset \P^n$ with a regular real point and a real point $p\in\P^n(\R)$ such that $\rrk_X(p) = \codim(X)+2 = n+1$. Furthermore, the maximal real rank can be typical.
\end{Thm}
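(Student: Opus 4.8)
The plan is to reduce to the hyperplane-counting criterion of Proposition~\ref{Prop:rankchar} and then build $X$ from a carefully chosen space of binary forms. I would realize a rational curve $X$ as the image of a morphism $\phi = (f_0 : \cdots : f_n)\colon \P^1 \to \P^n$ given by binary forms $f_i$ of a common degree $d$ with no common factor, so that $\mathcal{U} = \langle f_0, \dots, f_n\rangle$ is $(n+1)$-dimensional and $X$ is nondegenerate. A real point $p$ is dual to a hyperplane $\mathcal{F} \subset \mathcal{U}$, and the real hyperplanes of $\P^n$ through $p$ correspond exactly to the nonzero forms of $\mathcal{F}$; the real points of $H \cap X$ are then the images of the real roots of the corresponding form. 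Thus, provided $X$ has no isolated real points (so that real points of $X$ arise only from real parameters), Proposition~\ref{Prop:rankchar} reduces the theorem to producing an $n$-dimensional space $\mathcal{F}$ of binary $d$-forms such that (i) every nonzero form in $\mathcal{F}$ has at most $n-1$ distinct real roots, i.e.\ $\mathcal{F}$ is a Haar (Chebyshev) system of dimension $n$; and (ii) $\mathcal{F}$ is base-point-free, which is equivalent to $p \notin X$ and hence to $\rrk_X(p) \geq 2$.

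For the construction I would take $d = n+1$ and begin with $\mathcal{F}_0 = (x^2+y^2)\cdot\R[x,y]_{n-1}$, an $n$-dimensional space of forms of degree $n+1$. Every nonzero element $(x^2+y^2)\ell$ has real roots only among the at most $n-1$ real roots of $\ell$, so $\mathcal{F}_0$ is a Haar system; moreover two of the $n+1$ complex roots of each element are pinned at $\pm i$, uniformly bounded away from $\P^1(\R)$. (This $\mathcal{F}_0$ is precisely the linear system cutting out the rational normal curve of degree $n-1$, pulled back through the projection from $p$, matching the minimal-degree heuristic preceding the theorem; its base points are the two points lying over the projection center.) The uniform margin makes the Haar property quantitatively stable: any sufficiently small deformation $\mathcal{F}$ of $\mathcal{F}_0$ still has, for every unit-norm element, two roots near $\pm i$, hence at least two non-real roots and so at most $n-1$ real roots. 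The space $\mathcal{F}_0$ itself has the base points $\pm i$, so the associated $p$ lies on $X$; but a generic small deformation $\mathcal{F}$ is base-point-free while remaining Haar. Extending $\mathcal{F}$ to an $(n+1)$-dimensional $\mathcal{U} \subset \R[x,y]_{n+1}$ generically, so that the degree-$(n+1)$ rational curve $X$ defined by $\mathcal{U}$ is nondegenerate, has a smooth real point, and has no isolated real points (for $n \geq 3$ one may take $X$ smooth; for $n=2$ one arranges the unavoidable double point to be a crunode, not an acnode), Proposition~\ref{Prop:rankchar} yields $\rrk_X(p) = \codim(X) + 2 = n+1$.

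For the final assertion I would deduce typicality from the same stability. As $q$ ranges over a Euclidean neighborhood of $p$, the space $\mathcal{F}_q = q^\perp \cap \mathcal{U}$ of forms cutting the hyperplanes through $q$ varies continuously and stays close to $\mathcal{F}$; since base-point-freeness is an open condition and the Haar property of $\mathcal{F}$ holds with the uniform margin above, every $\mathcal{F}_q$ remains base-point-free and Haar. Hence $\rrk_X(q) = n+1$ for all $q$ near $p$, so the maximal real rank $n+1$ is attained on a Euclidean-open set and is therefore typical.

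The main obstacle is the tension between (i) and (ii): forcing few real roots pushes every form to share the positive-definite factor $x^2+y^2$, which creates base points and places $p$ on $X$, whereas base-point-freeness tends to liberate real roots. The deformation argument resolves this precisely because the Haar property is \emph{quantitatively} stable at $\mathcal{F}_0$ (two roots remain trapped near $\pm i$). The secondary technical point is to exclude isolated real points of $X$: such a point would be a real intersection point produced by a conjugate pair of non-real roots, unaccounted for by the real-root count, and could push the number of real points up to $n$ and destroy the bound.
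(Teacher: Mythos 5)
Your reduction to Haar systems and the stability argument for the Haar property are sound, and your $\mathcal{F}_0$ is indeed dual to the paper's minimal-degree projection. But the step you call ``secondary'' --- excluding isolated real points --- is where the whole difficulty sits, and your treatment of it has a genuine gap. In your setup, extending $\mathcal{F}$ to $\mathcal{U}$ amounts to choosing a projection center $z$ on the line $\Lambda\subset\P^{n+1}$ cut out by $\mathcal{F}$, and $X$ acquires an isolated real point exactly when $z$ lies on a secant of the rational normal curve $C_{n+1}$ spanned by conjugate points $\phi(q),\phi(\bar q)$. Your base configuration is the worst possible one: $\Lambda_0$, the zero locus of $(x^2+y^2)\R[x,y]_{n-1}$, \emph{is} the conjugate secant through $\phi(\pm i)$, so every real point of it is a bad center. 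Choosing the extension of a \emph{fixed} $\mathcal{F}$ generically cannot repair this, since that only moves $z$ along the fixed line $\Lambda$; what you need is that $\mathcal{F}$ can be perturbed so that $\Lambda\not\subset\sigma_2(C_{n+1})$, after which all but finitely many $z\in\Lambda(\R)$ work. Note that base-point-freeness does not give this: there are lines inside $\sigma_2$ disjoint from the curve, e.g.\ the pencil $\langle x^3y,\,xy^3\rangle$ of quartics. For $n\geq 3$ the needed statement is true (the lines contained in the $3$-dimensional variety $\sigma_2$ form a family of dimension at most $4<2n$, so a generic small perturbation of $\Lambda_0$ escapes it), but this argument, and the order of quantifiers it requires, are absent from your proof.

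For $n=2$ the gap cannot be repaired within your framework. Here $\sigma_2(C_3)=\P^3$, so $z$ must lie on some secant or tangent, and the Haar property forbids real secants: if $z=\alpha\phi(t_1)+\beta\phi(t_2)$ with $t_1\neq t_2$ real and $\alpha\beta\neq 0$, then a nonzero form in the $2$-dimensional space $\mathcal{F}$ vanishing at $t_1$ must also vanish at $t_2$, i.e.\ it has two distinct real roots, contradicting the bound $n-1=1$. So your proposed fix --- arranging a crunode --- is impossible; only conjugate secants (acnode) or tangents (cusp) remain. Moreover, a small perturbation of $\mathcal{F}_0$ keeps $\Lambda(\R)$ inside the set of strictly hyperbolic cubics ($\Lambda_0(\R)$ consists of the forms $\mathrm{Re}(\alpha(x+iy)^3)$, whose roots are distinct, so $\Lambda_0(\R)$ is compactly bounded away from the discriminant), and strictly hyperbolic cubics lie only on conjugate secants; hence every admissible $z$ yields an acnodal cubic. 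That is fatal: an acnodal cubic has \emph{no} points of real rank $3$, because for any $p$ the real line through $p$ and the acnode meets the acnode with intersection multiplicity exactly $2$ (the tangent cone there is a pair of conjugate lines) and therefore passes through a third, necessarily real, point of $X$, giving $\rrk_X(p)\leq 2$. (Crunodal cubics fail for the same reason, and for the one remaining rational cubic, the cuspidal one, the rank-$3$ locus has empty interior, so the typicality claim would fail as well.) This is precisely why the paper's $n=2$ example is a smooth, non-rational cubic, and why for $n\geq 3$ the paper intersects the cone over the rational normal curve with a cubic pseudohyperplane: that construction guarantees the bijection between real points of $X$ and of its projection outright, instead of leaving it to genericity.
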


\begin{proof}
For $n=2$, we can take $X = \V(x_2^2x_0-(x_1^2+x_0^2)(x_1-x_0))\subset\P^2$, see also section \ref{sec:planecurves}. So let $n\geq 3$.
We choose coordinates in such a way that $p = (1:0:0:\ldots:0)$ and set $s\colon \P^{n-1}\to\P^n$ to be $(x_1:x_2:\ldots:x_n)\mapsto (0:x_1:\ldots:x_n)$.
Let $\ol{X}\subset\P^{n-1}$ be a curve of minimal degree, i.e.~$\deg(\ol{X}) = \codim(\ol{X})+1$. Let $\wh{X}$ be the cone over $s(\ol{X})\subset \P^n$ with cone point $p$.
Consider the linear system $\fL$ of cubic hypersurfaces $J(W,L)$, where $W\subset\P^2 = \V(x_3,\ldots,x_n)\subset\P^n$ is a plane cubic curve and $L = \V(x_0,x_1,x_2)\subset\P^n$. Note that $\fL$ is the set of all cubic hypersurfaces in $\P^n$ whose equations involve only the variables $x_0,x_1,x_2$. 
The intersection of a general element of this linear system with $\wh{X}$ is irreducible by Bertini's Theorem, see \cite[Th\'eor\`eme 6.3(4)]{JouMR725671} (use $f\colon X\to \P^9$, $(x_0:x_1:\ldots:x_n) \mapsto (x_0^3:x_0^2x_1:x_0^2x_2:\ldots:x_2^3)$). In $\fL$, the set of pseudohyperplanes, i.e.~hypersurfaces $S$ such that the set of real points $S(\R)$ has only one connected component and $\P^n(\R)\setminus S(\R)$ is connected, has non-empty interior in the euclidean topology, because it is true for plane cubics (e.g.~$x_2^2x_0 - (x_1^2+x_0^2)(x_1-x_0)$ is an interior point).
So set $X = \wh{X}\cap S$ for a general cubic $S\in\fL$. Note that $X$ is nondegenerate.

By genericity, the projection $\pi_p\vert_{X}\colon X \to \P^{n-1}$ induces a $1-1$ correspondence of real points of $X$ and $\pi_p(X)$. Indeed, this is true for $S = \V(x_2^2x_0 - (x_1^2+x_0^2)(x_1-x_0))$ (see Example \ref{Exm:curveP3}), so also for a general cubic in $\fL$.

Now $\dim(X) = \dim(\ol{X})$ and, by construction, every hyperplane containing $p$ intersects $X$ in at most $\codim(\ol{X}) + 1 = \codim(X)$ real points. By Proposition \ref{Prop:rankchar}, $p$ has real rank $\rrk(p) = \codim(X) + 2 = n+1$ with respect to $X$, which proves the first claim.

Note that in this example, points in a neighbourhood of $p$ have the same real rank with respect to $X$: 

By construction, every real hyperplane $H\subset\P^n$ containing $p$ intersects $X$ in at most $n-1$ many real points, counting multiplicities. By upper semi-continuity of the intersection multiplicity (see the following Lemma \ref{Lem:intersectionmult}), the same is true for real points in a neighbourhood of $p$. So by Proposition \ref{Prop:rankchar}, they also have real $X$-rank $n+1$.
So the maximal real rank is typical in these examples.
\end{proof}

We used upper semi-continuity of the intersection multiplicity in the following form in the above proof.
\begin{Lem}\label{Lem:intersectionmult}
Let $X\subset \P^n$ be a nondegenerate irreducible curve and let $(H_j)_{j\in\N}\subset (\P^n)^\ast$ be a sequence of hyperplanes in $\P^n$, converging in the euclidean topology to $H \in(\P^n)^\ast$. Suppose the length of the schemes $X\cap H_j$ is at least equal to $k\in\N$. Then the length of $X\cap H$ is at least $k$.
In particular, the number of real intersection points is upper semi-continuous (counting with multiplicities).
\end{Lem}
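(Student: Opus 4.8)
The plan is to recognize the statement as upper semi-continuity of the function that counts real intersection points with multiplicity. Since $X$ is an irreducible, nondegenerate curve, no hyperplane contains $X$, so for every $H$ the scheme $X\cap H$ is finite, and its total length over $\C$ is the constant $d=\deg X$. Hence the \emph{total} length is locally constant and carries no content; the real content of the lemma — and the ``in particular'' clause — concerns the quantity
\[
\rho(H)\;=\;\sum_{q\in (X\cap H)(\R)}\operatorname{length}_q(X\cap H),
\]
the number of real intersection points counted with their complex intersection multiplicity. I would first reduce the lemma to the inequality $\rho(H)\ge k$, and prove the sharper statement that $\rho$ is upper semi-continuous: once I show $\rho(H_j)\le\rho(H)$ for $j\gg0$, the hypothesis $\rho(H_j)\ge k$ immediately gives $k\le\rho(H_j)\le\rho(H)$. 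The conservation of total length already makes the mechanism transparent: multiplicity is never destroyed, only exchanged between real points and conjugate complex pairs, and a pair can merge onto $X(\R)$ only in the limit, so $\rho$ can only jump up.

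The analytic core is the continuity, in the Euclidean topology, of the map sending $H$ to the $0$-cycle $X\cap H$, regarded as an unordered $d$-tuple of points of $X(\C)$. I would prove this by passing to the normalization $\nu\colon\wt X\to X$, a smooth projective curve on which $\mathcal O(1)$ pulls back to a real line bundle $L$ of degree $d$. A linear form $\ell$ cutting out $H$ pulls back to a section $s_\ell$ of $L$, and $\operatorname{div}(s_\ell)$ varies continuously with $\ell$ by the standard continuity of the zeros of a holomorphically varying section (Rouch\'e in local charts). Pushing forward along the continuous finite map $\nu$ then shows $X\cap H$ varies continuously, provided one knows the length identity $\operatorname{length}_q(X\cap H)=\sum_{p\in\nu^{-1}(q)}\operatorname{ord}_p(s_\ell)$. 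Throughout, real hyperplanes correspond to conjugation-invariant divisors.

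With continuity in hand I would run a local ``disjoint balls'' argument at the limit $H$. Let $q_1,\dots,q_m$ be the real points of $X\cap H$, with multiplicities $\mu_i$, so $\rho(H)=\sum_i\mu_i$, and let the remaining points occur in conjugate complex pairs. Choose pairwise disjoint, conjugation-invariant Euclidean neighborhoods $B_i\ni q_i$ and a neighborhood $U$ of the complex points with $U\cap X(\R)=\emptyset$. By continuity, for $j\gg0$ every point of $X\cap H_j$ lies in $\bigcup_i B_i\cup U$, and the total multiplicity inside each $B_i$ equals $\mu_i$. Since $U$ contains no real point, every real point of $X\cap H_j$ lies in some $B_i$, where its multiplicity is bounded by the total $\mu_i$; summing yields $\rho(H_j)\le\sum_i\mu_i=\rho(H)$, as required.

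I expect the genuine obstacle to be the second paragraph: making ``$X\cap H$ varies continuously'' rigorous when $X$ is singular, which is exactly why I route the argument through the smooth curve $\wt X$. The price is the length identity comparing the scheme length on $X$ with the pulled-back divisor on $\wt X$, and I would prove it by applying the snake lemma to multiplication by $\ell$ on $0\to\mathcal O_X\to\nu_*\mathcal O_{\wt X}\to T\to0$, where $T$ is the finite-length conductor torsion: multiplication by a nonzerodivisor on a finite-length module has kernel and cokernel of equal length, so the contributions of $T$ cancel and the lengths on $X$ and $\wt X$ agree point by point. The remaining ingredients — the constant global degree and the elementary topology of the balls argument — are routine.
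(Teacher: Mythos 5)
Your proof is correct, but it follows a genuinely different route from the paper's. The paper proves the scheme-theoretic clause exactly as stated, by reducing it to linear algebra: for $d\gg 0$ the length of $X\cap H_j$ is the codimension of the linear subspace $I_d+\langle\ell_j\rangle_d$ inside the degree-$d$ forms, where $I$ is the homogeneous ideal of $X$ and $\ell_j$ is a defining equation of $H_j$ scaled so that $\ell_j\to\ell$; since dimension can only drop under Euclidean limits of linear subspaces, the codimension of $I_d+\langle\ell\rangle_d$ is at least $k$. The ``in particular'' clause about real points is then dispatched in a single sentence (the condition of having a given number of non-real roots is open). You argue in the opposite order: you observe that the total complex length is identically $\deg X$ by B\'ezout, so the entire content lies in the real clause, and you prove that clause directly --- continuity of the $0$-cycle $X\cap H$ as a function of $H$ (via the normalization, Rouch\'e/Hurwitz, and the snake-lemma/conductor identity matching local lengths on $X$ with vanishing orders on the normalization), followed by the disjoint-balls argument giving $\rho(H_j)\le\rho(H)$ for $j\gg 0$. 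Your snake-lemma identity is correct (multiplication by a local equation of $H$ on the finite-length sheaf $\nu_*\mathcal{O}_{\widetilde{X}}/\mathcal{O}_X$ has kernel and cokernel of equal length), and the balls argument is precisely the rigorous form of the paper's terse final sentence. In comparison: the paper's proof is shorter, purely algebraic, and needs neither B\'ezout nor the normalization; yours proves a stronger, cleaner statement (continuity of the intersection cycle, valid also at singular points of $X$) and gives a complete argument for exactly the semicontinuity of the real count with multiplicities that is invoked later in the proof of Theorem \ref{Thm:tight}, which the paper supports only by a one-line openness remark.
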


\begin{proof}
First note that the intersection of $X$ with any hyperplane is $0$-dimensional. The length of a $0$-dimensional scheme is equal to the constant coefficient of the Hilbert polynomial of its defining ideal. So the length of the scheme $X\cap H_j$ is the dimension of $\R[x_0,\ldots,x_n]/(I_d + \langle \ell_j\rangle_d)$, where $I$ is the homogeneous ideal defining $X$ and $H_j = \V(\ell_j)$, for large enough $d\in\N$. Choose the scaling of the linear functionals $\ell_j$ such that the sequence $(\ell_j)_{j\in\N}$ converges to $\ell$. Then $H = \V(\ell)$ and the codimension of $I_d + \langle \ell \rangle_d$ is at least $k$, which is a lower bound on the codimension of $I_d + \langle \ell_j \rangle_d$, by semi-continuity of the dimension in linear algebra.\\
This implies that the number of real intersection points is upper semi-continuous because the condition of having a given number of complex roots is open.
\end{proof}

\begin{Exm}\label{Exm:curveP3}
We construct a curve $X\subset \P^3$ such that the point $P = (1:0:0:0)$ has real rank $4$ with respect to $X$ following our proof of Theorem \ref{Thm:tight}. We start with a curve $\ol{X}$ of minimal degree in $\P^2 = \V_+(x_0) \subset \P^3$, say $\ol{X} = \{(s^2:st:t^2)\colon (s:t)\in\P^1\}\subset \P^2$. Then $\wh{X}$ is the cone over $\ol{X}$ with cone point $P$, i.e.~$\wh{X} = \V(x_1 x_3 - x_2^2)\subset \P^3$. We now intersect with the pseudohyperplane $S = \V(x_2^2 x_0 - (x_1^2 + x_0^2)(x_1-x_0))$ and set $X = \wh{X}\cap S$. Then $X$ is irreducible and the projection $\pi_P$ from $P$ induces a $1-1$ correspondence on real points. Indeed, the discriminant of the cubic $(x_2^2 x_0 + (x_1^2 + x_0^2)(x_1-x_0)$ in $x_0$ is 
\[
 -16 x_1^6 + 8 x_1^4 x_2^2 - 11 x_1^2 x_2^4 - 4 x_2^6 = 
 -\left( (2 x_2^3)^2 + 11 (x_2^2 x_1 - \frac{4}{11} x_1^3)^2 + \frac{160}{11} x_1^6 \right).
\]
So whenever $x_1$ and $x_2$ are real, the discriminant is a negative number as certified by the sum of squares representation. This means that there is a single real root $x_0$, given real values for $x_1$ and $x_2$. If $x_1$ and $x_2$ are real, then so is $x_3$ because $\pi_P(X) = \ol{X} = \V(x_1 x_3 - x_2^2)\subset \V(x_0)$. This shows that $\pi_P$ induces a $1-1$ correspondence on real points of $X$ and $\ol{X}$.

By construction, every plane $L$ in $\P^3$ through $P$ intersects $X$ in at most $2$ real points because the line $\pi_P(L)$ intersects $\ol{X}$ in at most two real points. By Proposition \ref{Prop:rankchar}, the point $P$ has real rank $4$ with respect to $X$.
\end{Exm}

We can also get rid of the genericity assumptions in the proof of Theorem \ref{Thm:tight} at the cost of taking a Zariski closure of real intersection points, i.e.~computing a real radical ideal. 
\begin{Rem}
As in the proof of Theorem \ref{Thm:tight}, let $\ol{X}\subset\P^{n-1} = \V(x_0)\subset\P^n$ be a curve of minimal degree. Set $\wh{X} = J(\ol{X},\{p\})$ and $S = \V(x_1^2x_2 - (x_0^2 + x_2^2)(x_0-x_2))\subset\P^n$. Let $L = \V(x_0,x_1,x_2)$ and suppose $L\cap \ol{X} = \emptyset$, which can be achieved by choosing suitable coordinates on $\V(x_0)$. Then the Zariski closure $X$ of $(\wh{X}\cap S)(\R)$ is irreducible and the real rank of $p = (1:0:\ldots:0)$ with respect to $X$ is $n+1$.

This is true, because $\wh{X}$ and $S$ intersect transversally in real points. Note that every intersection point of $\wh{X}$ and $S$ is regular on both varieties. So this claim is true because every real line through $p$ intersects $S$ in a simple real point (every line through $(0:1:0)$ intersects $x_2^2x_0 = (x_1^2+x_0^2)(x_1-x_0)$ transversally in one real point.
Because $\ol{X}(\R)$ is connected and the projection $\pi_p\colon X \to \ol{X}$ gives a $1-1$ correspondence on real points, $(\wh{X}\cap S)(\R)$ is also connected. These points are regular points on $\wh{X}\cap S$, so the Zariski closure is irreducible.
\end{Rem}

\begin{Cor}[to Theorem \ref{Thm:tight}]\label{Cor:tight}
For every pair of integers $d,c\geq 1$, there is an irreducible, nondegenerate variety $X\subset \P^{d+c}$ of dimension $d$ with regular real point and a real point $p\in\RP[d+c]$ with maximal real rank $c+2$ with respect to $X$. Furthermore, the maximal real rank can also be typical.
\end{Cor}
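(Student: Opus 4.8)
The plan is to bootstrap from the curve case already settled in Theorem \ref{Thm:tight}, raising the dimension by joining with a linear space and controlling the rank via Proposition \ref{Prop:joins}. For $d = 1$ the assertion is literally Theorem \ref{Thm:tight} with $n = c+1$, so assume $d \geq 2$. Let $C \subset \P^{c+1} = \P(V)$, $V = \R^{c+2}$, be the curve furnished by Theorem \ref{Thm:tight}: it is irreducible, nondegenerate, has a regular real point, carries a real point $q$ with $\rrk_C(q) = c+2$, and has a nonempty Euclidean-open set $U \subset \P^{c+1}(\R)$ of points of real $C$-rank $c+2$. Put $W = \R^{d-1}$, let $\Lambda = \P(W) = \P^{d-2}$, and form the external join $X = J(C,\Lambda) \subset \P(V\oplus W) = \P^{c+d}$ (for $d = 2$ this is the cone over $C$ with a single vertex).

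The first step is the numerical and geometric bookkeeping. As a join of irreducible varieties $X$ is irreducible; since $C$ spans $\P(V)$ and $\Lambda$ spans $\P(W)$, their join spans $\P(V\oplus W)$, so $X$ is nondegenerate; and $\dim X = \dim C + \dim \Lambda + 1 = d$, whence $\codim X = c$. A general real point lying in the interior of a ruling line through a regular real point of $C$ and a real point of $\Lambda$ is regular on $X$, its tangent space being the span of the two (expected-dimensional) factor tangent spaces, so $X$ has a regular real point. In particular $X$ meets the hypotheses of Theorem \ref{Thm:UpperBound}, which bounds every real $X$-rank by $\codim X + 2 = c+2$.

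It remains to produce real points of rank exactly $c+2$ and to see that they form a Euclidean-open set. Since $\Lambda$ is a linear space equal to its ambient $\P(W)$, its affine cone is all of $W$, so $\rrk_\Lambda(y) \leq 1$ for every $y$. Hence Proposition \ref{Prop:joins} gives, for any real $(x:y)$ with $x \neq 0$, the equality $\rrk_X(x:y) = \max\{\rrk_C(x),\rrk_\Lambda(y)\} = \rrk_C(x)$, because $x \neq 0$ forces $\rrk_C(x) \geq 1$. Taking $p = (q:y_0)$ with $y_0 \neq 0$ yields $\rrk_X(p) = \rrk_C(q) = c+2$, maximal by the previous paragraph. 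For the ``furthermore'', consider $U' = \{(x:y) \in \P(V\oplus W)(\R) : x \neq 0,\ [x]\in U\}$; this is the preimage of the open set $U$ under the projection $(x:y)\mapsto [x]$ away from $\Lambda$, hence nonempty and Euclidean-open, and every point of $U'$ has real $X$-rank $c+2$. Thus the maximal real rank is typical.

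The rank equalities are immediate from Proposition \ref{Prop:joins} once one observes that the rank with respect to a full linear space is at most one, so the only points requiring genuine care are auxiliary: verifying that the external join inherits a regular real point (a standard tangent-space dimension count, using that external joins of nondegenerate varieties are never defective) and confirming that $U'$ is truly open in the Euclidean topology. Neither is a serious obstacle, so the heart of the corollary is simply packaging Theorem \ref{Thm:tight} through the join.
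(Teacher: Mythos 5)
Your proof is correct and is essentially the paper's own argument: the paper likewise takes the curve furnished by Theorem \ref{Thm:tight} in $\P^{c+1}$ and forms its join with a complementary linear subspace of dimension $d-2$ (equivalently, iterated cones), invoking Proposition \ref{Prop:joins} to transfer the rank. Your write-up merely makes explicit the bookkeeping --- dimension, nondegeneracy, the regular real point, and the openness argument for typicality --- that the paper leaves implicit.
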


\begin{proof}
We obtain these varieties by constructing a curve $\ol{X}\subset\P^{c+1}$ of codimension $c$ with the required properties. We then take cones over $\ol{X}$ iteratively until we arrive at the desired dimension $d$. In other words, we take the join of $\ol{X}$ and a complimentary linear subspace of dimension $d-2$ as in Proposition \ref{Prop:joins}.
\end{proof}

\section{Minimal Typical Versus Maximal Rank for Curves}\label{sec:Waring}
The maximal real $X$-rank is at most twice the minimal typical real rank, which is equal to the generic complex rank, see \cite[Theorem 1]{BleTei}. Our goal is to construct explicit examples of curves where this gap is achieved. These examples will be projections of the rational normal curve from suitably chosen points. We will talk about the rank with respect to the rational normal curve in terms of the Waring rank of binary forms (equivalently symmetric tensor rank of bivariate tensors): We identify $\P^d$ with the space $\C[x,y]_d$ of binary forms of degree $d$ by associating the form 
\[
\sum_{i=0}^d \binom{d}{i}a_ix^{d-i}y^{i}
\]
to the point $(a_0:a_1:\ldots:a_d)\in \P^d$. In this identification, the rational normal curve in $\P^d$ is sent to the set of powers of linear forms. More precisely, the point $v_d(s:t)=(s^d:s^{d-1}t:\ldots:t^d)$ is identified with the binary form $(sx+ty)^d$. Via this identification, the real rank of a point $p\in\RP[d]$ with respect to the rational normal curve is equal to the real Waring rank of the associated binary form, i.e.~the smallest $r$ such that the form is a sum of $r$ powers of order $d$ of real linear forms.

The advantage is that we can characterize the forms of maximal real rank in terms of their zeros. We need some preparations.
\begin{Def}
We call a binary form $f\in\R[x,y]_d$ hyperbolic if all its roots are real, i.e.~it splits into linear factors over $\R$.
\end{Def}

Reznick has shown \cite[Corollary 4.11]{BruceMR3156559} that a hyperbolic binary form has real Waring rank $d$, which is maximal. 
An almost complete converse of this statement was established by \cite{CausaRe, CO}, who showed that any form of rank $d$ with distinct roots has all real roots. We will prove the full converse, see \cite[Conjecture 4.12]{BruceMR3156559} via a simple inductive argument:
\begin{Thm}\label{Thm:hyperbolic}
Let $f\in\R[x,y]_d$ be a binary form of degree $d\geq 3$ and suppose that $f$ is not a $d$-th power of a real linear form. The real Waring rank of $f$ is $d$ if and only if $f$ is hyperbolic.
\end{Thm}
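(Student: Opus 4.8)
The plan is to prove the nontrivial direction by induction on the degree $d$: assuming $f\in\R[x,y]_d$ has real Waring rank exactly $d$ and is not a $d$-th power of a linear form, I want to conclude that $f$ is hyperbolic. (The converse direction, that hyperbolic forms have rank $d$, is Reznick's result \cite[Corollary 4.11]{BruceMR3156559}, which I may assume.) The base case $d=3$ can be handled directly, since binary cubics of maximal real rank $3$ are exactly those with a conjugate pair of complex roots removed---i.e.\ those that are \emph{not} hyperbolic have rank $\leq 2$---so the equivalence is a short computation with the normal forms of real binary cubics.

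For the inductive step I would argue by contrapositive: suppose $f$ is \emph{not} hyperbolic, so it has a complex-conjugate pair of roots $\alpha,\bar\alpha$ and hence a real irreducible quadratic factor $q$ dividing $f$. I want to show $\rrk(f)\leq d-1$. The key idea is to pass to a related form of degree $d-1$ and use the inductive hypothesis. Concretely, I plan to exhibit a real linear form $\ell$ and a form $g$ of degree $d-1$ with
\[
f = \ell^{\,d} + g \quad\text{or more usefully}\quad f = c\,\ell^{\,d} + h,
\]
where $h$ is chosen so that $g$ (resp.\ $h$) is \emph{not} hyperbolic and therefore, by induction, satisfies $\rrk(g)\leq d-2$; adding back the single power $\ell^{\,d}$ then gives $\rrk(f)\leq d-1<d$. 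The mechanism producing such a splitting should come from the apolarity/catalecticant picture: the real rank of a binary form is controlled by the real roots of forms in its apolar ideal, and a non-hyperbolic $f$ admits an apolar form of degree $d-1$ (or $d$) whose roots can be arranged to be partly real, yielding a decomposition strictly shorter than $d$.

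The main obstacle---and the reason the distinct-roots hypothesis appears in the earlier work of Causa--Re and Comon--Ottaviani \cite{CausaRe, CO}---is handling \emph{repeated} roots of $f$, where the clean Hankel/catalecticant rank arguments degenerate. My inductive strategy is designed precisely to sidestep this: rather than reading rank off the generic structure of the apolar ideal (which behaves badly at multiple roots), I reduce the degree by one while preserving non-hyperbolicity, so that multiplicities are peeled away one step at a time and never need to be analyzed all at once. The delicate point I expect to spend the most care on is verifying that the degree-reduction step can \emph{always} be performed so that the resulting lower-degree form is still non-hyperbolic (so that the inductive hypothesis applies to it) and so that the subtracted term is a genuine real $d$-th power; this amounts to a careful choice of the real linear form $\ell$, for instance choosing $\ell$ to be a real linear factor of $f$ when one exists, and otherwise extracting $\ell$ from the real quadratic factor $q$ in a way that lowers the degree without creating a fully hyperbolic remainder. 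Once this step is established, the induction closes immediately.
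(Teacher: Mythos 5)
There is a genuine gap, and it sits exactly where you predicted you would ``spend the most care'': the degree-reduction step does not exist in the form you propose. If $f$ is homogeneous of degree $d$, then $f - c\,\ell^{\,d}$ is again homogeneous of degree $d$, never of degree $d-1$, so the inductive hypothesis (a statement about forms of degree $d-1$) can never be applied to $h = f - c\,\ell^{\,d}$. A dehomogenized reading does not save this: if you arrange $f(x,1) - c\,\ell(x)^d$ to have degree $\leq d-1$, then upon rehomogenizing you get $h = y\,h'$ with $\deg h' = d-1$, and a Waring decomposition of $h'$ into $(d-1)$-st powers yields no decomposition of $h$ into $d$-th powers; likewise your alternative of taking $\ell$ to be a real linear \emph{factor} of $f$ fails because multiplication is incompatible with Waring decompositions ($\ell\, m^{d-1}$ is not a $d$-th power). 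The only degree-lowering operation that does interact with Waring decompositions is differentiation: if $f = \sum_i c_i\ell_i^{\,d}$ with $\ell_i = a_ix+b_iy$, then $\partial_x f = d\sum_i c_i a_i \ell_i^{\,d-1}$. This is what the paper uses, and it comes with an obstruction your sketch never confronts: to pass from a length-$(d-2)$ decomposition of $\partial_x f$ back up to a length-$(d-1)$ decomposition of $f$, one integrates term by term, which is only possible if the decomposition of $\partial_x f$ avoids the power $y^{d-1}$ (a summand $y^{d-1}$ integrates to $xy^{d-1}$, not a power). The paper's Lemma \ref{Lem:apolar} shows, via the structure of the apolar ideal, that this obstruction occurs only when $\partial_x f$ equals $ay^{d-1}$ or $ay^{d-1}+b\ell^{\,d-1}$, i.e.\ only when $f = y^{d-1}(ax+by)$ or $f = y^{d-1}(ax+by)+c\ell^{\,d}$, and those special forms are then analyzed separately.

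Your appeal to apolarity (``a non-hyperbolic $f$ admits an apolar form of degree $d-1$ whose roots can be arranged to be partly real'') is not a mechanism but a restatement of the goal: by the Apolarity Lemma, $\rrk(f)\leq d-1$ means precisely that $(f^\perp)_{d-1}$ contains a form with $d-1$ distinct real roots, and producing such a form when $f$ has repeated roots is exactly what Causa--Re and Comon--Ottaviani could not do, so the repeated-root difficulty is not sidestepped --- it reappears untouched. Moreover, even if you could show that some directional derivative of $f$ is non-hyperbolic and hence, by induction, of rank at most $d-2$, you would still need the transfer of hyperbolicity between $f$ and its derivatives: the paper proves as Proposition \ref{Prop:hyperbolic} (by a homotopy/interlacing argument) that $f$ is hyperbolic if and only if every directional derivative $\dd_v f$ is, and it is this statement (in one direction or the other, depending on how the induction is phrased) that closes the loop. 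Your proposal contains neither this proposition nor the differentiation/antiderivative mechanism together with Lemma \ref{Lem:apolar}; these are the two pillars of the actual proof, and only the induction scaffolding and the base case $d=3$ (Reznick) survive from your outline.
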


For the proof of this Theorem, we establish several useful facts about hyperbolic binary forms and state applications of the Apolarity Lemma. Before we do so, let us observe the following fact.

\begin{Rem}\label{Rem:Ballicobound}
Let $X\subset \P^d$ be the rational normal curve of degree $d$. Then $\codim(X) + \deg(X) = 2d-1$ is odd and the maximal real rank $d$ with respect to $X$ is achieved by every hyperbolic polynomial. This maximal rank agrees with Ballico's upper bound $\codim(X) + 1$ in \cite[Theorem 1(ii)]{BallMR2771116}.
\end{Rem}

\begin{Def}
Let $f$ and $g$ be two real binary hyperbolic forms of degree $d$. We say that $f$ and $g$  interlace if there is a root of $g$ in every arc between two roots of $f$ in $\P^1(\R)\cong S^1$. If $f$ and $g$ have common roots, then this condition is understood to mean that after dividing by the greatest common divisor the quotients $\bar{f}$ and $\bar{g}$ interlace in the sense above. 

\end{Def}

\begin{Prop}[{\cite[Proposition 1.35]{Fisk}}]
\label{Prop:interlace}
Let $f$ and $g$ be two real binary hyperbolic forms of degree $d$. Then $f$ and $g$ interlace if and only if $\alpha f + \beta g$ is hyperbolic for every $\alpha,\beta\in\R$. 
\end{Prop}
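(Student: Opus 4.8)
The plan is to prove both implications by passing to the double cover $S^1\to\P^1(\R)$ (where a degree-$d$ form restricts to a genuine real-analytic function with $2d$ zeros, antipodal in pairs) and by exploiting the Wronskian $W=W(f,g)=f_xg_y-f_yg_x$, a real binary form of degree $2d-2$ whose real zeros are exactly the critical points on $\P^1(\R)$ of the rational function $\theta\mapsto -f(\theta)/g(\theta)$, equivalently the ramification points of the pencil $\{\alpha f+\beta g\}$. Two preliminary reductions clear away the degenerate cases. First, if $h=\gcd(f,g)$ then $h$ is hyperbolic (a factor of a hyperbolic form) and $\alpha f+\beta g=h\,(\alpha\bar f+\beta\bar g)$, so hyperbolicity of the whole pencil is equivalent to that of the coprime pencil $\{\alpha\bar f+\beta\bar g\}$; by definition interlacing of $f,g$ is interlacing of $\bar f,\bar g$, so I may assume $f$ and $g$ are coprime. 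Second, a repeated root is handled by a local fold argument: if a member of the pencil has a root $\mu$ of multiplicity $\ge 2$, then locally it looks like $c(u-\mu)^2+(\text{const})\cdot t$, so for $t$ on one side of the critical value the two nearby roots become a complex conjugate pair; this already produces a non-hyperbolic member and will be reused below. Thus the substantive case is $f,g$ coprime with simple roots.

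For the forward implication, suppose $f$ and $g$ interlace. Label the $2d$ zeros $\theta_1<\cdots<\theta_{2d}$ of $f$ on $S^1$. Interlacing means each of the $2d$ cyclic arcs $(\theta_j,\theta_{j+1})$ contains exactly one zero of $g$, so the values $g(\theta_1),\dots,g(\theta_{2d})$ strictly alternate in sign; since $2d$ is even this alternation is consistent around the wrap-around arc $(\theta_{2d},\theta_1)$ as well. Fix $t\neq 0$ and consider $H_t=f+tg$ restricted to $S^1$. At the zeros of $f$ one has $H_t(\theta_j)=t\,g(\theta_j)$, which again strictly alternate, so by the intermediate value theorem $H_t$ has a zero in each of the $2d$ arcs. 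That is $2d$ zeros on $S^1$, i.e.\ $d$ real roots of the degree-$d$ form $f+tg$, which is therefore hyperbolic. The excluded values give $f$ (at $t=0$) and $g$ (as $t\to\infty$), both hyperbolic by hypothesis, and a general $\alpha f+\beta g$ is a scalar multiple of one of these, proving that the whole pencil is hyperbolic.

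For the converse I argue the contrapositive: assume $f,g$ (coprime, simple roots) do not interlace. Counting shows $g$ has exactly $d$ roots distributed among the $d$ arcs of $\P^1(\R)$ cut out by the roots of $f$, so failure of interlacing forces some arc $A=(\theta_i,\theta_{i+1})$ between consecutive roots of $f$ to contain no root of $g$. On $\overline A$ the function $\varphi=f/g$ is well defined, vanishes at both endpoints, and is nonzero with constant sign in the interior; hence $\varphi$ attains an interior extremum at some $\mu\in A$, where
\[
\varphi'(\mu)=\frac{f_x g_y-f_y g_x}{g^2}(\mu)=0,\qquad\text{so } W(\mu)=0 .
\]
Setting $t_0=-f(\mu)/g(\mu)$, the member $f+t_0 g$ vanishes to order $\ge 2$ at $\mu$ (both its value and its derivative vanish there, the latter because $W(\mu)=0$), so $\mu$ is a fold point of the pencil; by the local fold computation from the reductions, for $t$ on the appropriate side of $t_0$ this double root splits into a complex conjugate pair, exhibiting a non-hyperbolic member of the pencil. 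The same fold argument dispatches any pre-existing repeated root, completing the contrapositive.

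The main obstacle is the careful bookkeeping that makes the sign/counting arguments rigorous: keeping track of the factor-of-two between $S^1$ and $\P^1(\R)$ and the sign ambiguity of odd-degree forms, verifying that the $2d$ zeros produced by the intermediate value theorem are all the zeros (so the count is exactly $d$ real roots, with no hidden multiplicities inflating the tally), and justifying that a simple critical point of $f/g$ genuinely forces a complex conjugate pair on one side rather than merely a higher-order tangency. Organizing these through the single invariant $W$—interlacing $\iff$ $W$ has no real zero $\iff$ the entire pencil is hyperbolic—keeps the two directions symmetric and isolates the delicate fold analysis as the one computational heart of the proof.
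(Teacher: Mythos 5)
The paper does not actually prove this proposition: it is quoted from Fisk's book \cite{Fisk}, so there is no in-paper argument to compare yours against, and your proposal has to be judged on its own merits. Your route is the classical Hermite--Kakeya--Obreschkoff argument, and its two engines are essentially sound. The forward implication for coprime forms with simple roots is correct: sign alternation of $g$ at the $2d$ circle zeros of $f$, the intermediate value theorem in each of the $2d$ arcs, and the count that a degree-$d$ form with at least $2d$ zeros on $S^1$ has exactly $d$ distinct real projective roots, hence is hyperbolic. The contrapositive of the converse is also correct in outline: an arc free of zeros of $g$, Rolle applied to $f/g$ to manufacture a member $f+t_0g$ with a root of multiplicity $\geq 2$, then a perturbation inside the pencil creating a conjugate pair. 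Two caveats there are minor and fixable: the fold computation needs a Rouch\'e/Hurwitz justification (which you flag yourself), and your local model $c(u-\mu)^2+\mathrm{const}\cdot t$ is literally valid only for multiplicity exactly $2$; for multiplicity $m\geq 3$ one needs the $m$-th-root picture, which still produces non-real roots.

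The genuine gap is your second ``reduction.'' The fold lemma proves: a coprime pencil containing a member with a repeated root has a non-hyperbolic member. That is exactly what the converse needs, but for the forward implication it points the wrong way. If $f,g$ are coprime, interlace, and (say) $f$ has a repeated root, your lemma produces a non-hyperbolic member of the pencil -- that is, it would \emph{refute} the implication you are proving rather than reduce it to the simple-root case. To close the argument you must show this situation cannot occur, i.e.\ that coprime interlacing forms necessarily have simple roots. This forces you to read the definition of interlacing with multiplicities: a root of $f$ of multiplicity $m\geq 2$ makes two consecutive roots of $f$ (listed with multiplicity) coincide, the arc between them degenerates, and interlacing would require $g$ to vanish there, contradicting coprimality. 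This missing lemma is short, but it is not optional: under the naive reading ``a root of $g$ in every arc between distinct roots of $f$,'' the hyperbolic coprime pair $f=x^2$, $g=y^2$ interlaces vacuously, yet $f+g=x^2+y^2$ is not hyperbolic -- so as written, your forward direction is unproved (indeed false) in precisely the degenerate case your reduction claims to dispose of.
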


We need the following characterization of hyperbolicity for binary forms. The same statement for forms with distinct roots is proved in \cite[Theorem 1]{CausaRe}.
\begin{Prop}\label{Prop:hyperbolic}
A binary form $f\in\R[x,y]_d$ of degree $d\geq 3$ is hyperbolic if and only if all its directional derivatives $\dd_v f = \langle \nabla f,v\rangle$, $v\in\R^2$, are hyperbolic.
\end{Prop}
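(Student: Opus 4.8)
The plan is to prove the two implications separately, organized around the pencil of polar (directional) derivatives $\dd_v f = v_1\partial_x f + v_2\partial_y f$, $v=(v_1,v_2)$, together with Proposition \ref{Prop:interlace}. For the forward implication I would first reduce to a single derivative by exploiting $GL_2(\R)$-equivariance: for any $v\neq 0$ choose $A\in GL_2(\R)$ whose first column is $v$, and observe from the chain rule that $\partial_x(f\circ A) = (\dd_v f)\circ A$. Since precomposition with an invertible real linear map preserves hyperbolicity (it acts bijectively on $\P^1(\R)$), $\dd_v f$ is hyperbolic if and only if $\partial_x(f\circ A)$ is, and $f\circ A$ is hyperbolic because $f$ is. Thus it suffices to show that $\partial_x$ of a hyperbolic form is hyperbolic. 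This is Rolle's theorem: dehomogenizing at $y=1$ turns $f$ into a real polynomial $F$ with only real roots, turns $\partial_x f$ into $F'$, and Rolle (together with a count of the multiplicity of the root at infinity, which equals the drop in degree from $f$ to $F$) shows that $F'$, hence $\partial_x f$, has all $d-1$ roots real. The degenerate case $\dd_v f\equiv 0$ occurs only when $f$ is a $d$-th power of a linear form, which is hyperbolic, so that case is immediate.

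For the converse I would first translate the hypothesis into interlacing. From the assumption that all $\dd_v f$ are hyperbolic we get in particular that $\partial_x f$ and $\partial_y f$ are hyperbolic of degree $d-1$, and since every combination $\alpha\partial_x f+\beta\partial_y f=\dd_{(\alpha,\beta)}f$ is hyperbolic, Proposition \ref{Prop:interlace} shows that $\partial_x f$ and $\partial_y f$ interlace on $\P^1(\R)$. After a generic real linear change of coordinates (which preserves all hypotheses) I may assume that none of $f,\partial_x f,\partial_y f$ vanishes at $(1:0)$ or $(0:1)$, and work in the affine chart $y=1$. Writing $F(x)=f(x,1)$, Euler's relation $d f = x\partial_x f+y\partial_y f$ produces the two companion polynomials $F' = \partial_x f(x,1)$ and $H := dF - xF' = \partial_y f(x,1)$, both of degree $d-1$ and both with only real roots, whose root sets interlace.

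The heart of the argument is then a sign-alternation (reverse Rolle) step. The crucial identity is that at any critical point $c$ of $F$, i.e.\ any root of $F'$, one has $H(c) = dF(c)$. Interlacing forces a root of $H$ strictly between consecutive roots $c_i<c_{i+1}$ of $F'$, so $H$ changes sign there; hence $F(c_i)$ and $F(c_{i+1})$ have opposite signs, and $F$ has a root in each interval $(c_i,c_{i+1})$. This produces $d-2$ real roots of $F$ interspersed among the $d-1$ critical points. To upgrade from $d-2$ to all $d$ roots I would analyze the two outer rays $(-\infty,c_1)$ and $(c_{d-1},\infty)$: the cyclic interlacing on $\P^1(\R)$ places one further root of $H$ on the arc through the point at infinity, and comparing the leading behaviours $F\sim a_d x^d$ and $H\sim a_{d-1}x^{d-1}$ supplies the sign changes that force one additional real root on each outer ray, for $d$ real roots in total; hyperbolicity of $f$ follows.

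The main obstacle I anticipate is exactly this endpoint-and-parity bookkeeping: obtaining the last two roots requires tracking the sign of $H$ across infinity against the signs of the leading coefficients of $F$ and $H$, and handling the cases where roots of $\partial_x f$ and $\partial_y f$ coincide or are non-simple (the common-root convention in the definition of interlacing). I would control the multiplicity issues either by passing to the greatest common divisor as in the interlacing convention, or by a limiting argument from the generic distinct-root case. This is also where the hypothesis $d\geq 3$ is essential: for $d=2$ the two partials are linear and ``interlace'' automatically, yet $f$ need not be hyperbolic, precisely because there are then too few critical points to force even a single real root. As a fallback to the explicit endpoint analysis I would keep in reserve a topological degree count: the polar correspondence $\{((a{:}b),(x{:}y)):\dd_{(a,b)}f(x,y)=0\}$ is a real curve covering $\P^1(\R)$ with all $d-1$ sheets real, and its intersection number with the diagonal $\{(a{:}b)=(x{:}y)\}$, which records the zeros of $f$, is then forced to consist of $d$ real points.
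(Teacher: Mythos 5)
Your forward direction coincides with the paper's (reduce to $\partial_x$ by a real change of coordinates, then Rolle), and your plan for the converse --- turning the hypothesis into interlacing of $\partial_x f$ and $\partial_y f$ via Proposition \ref{Prop:interlace} and running a reverse-Rolle sign count on $F=f(x,1)$, $F'$ and $H=dF-xF'$ --- is genuinely different from the paper's proof, which is a homotopy argument (deform the complex roots to $\pm i$ and the real roots to $0$, reducing to the explicit contradiction that $\partial_x\bigl(x^{d-2}(x^2+y^2)\bigr)$ is not hyperbolic). However, your endpoint step has a real gap. Normalize $a_d>0$, where $F=a_dx^d+a_{d-1}x^{d-1}+\cdots$, and note that the leading coefficient of $H$ is exactly $a_{d-1}$. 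Suppose the roots of $F'$ are $c_1<\cdots<c_{d-1}$ (simple, generic case) and the extra root of $H$ lies in $(c_{d-1},\infty)$, i.e.\ on your arc through infinity. Then $H$ has one sign change on $(c_{d-1},\infty)$ and none on $(-\infty,c_1)$, so $\operatorname{sign}F(c_{d-1})=-\operatorname{sign}(a_{d-1})$ and $\operatorname{sign}F(c_1)=(-1)^{d-1}\operatorname{sign}(a_{d-1})$. Comparing with $F(+\infty)=+\infty$ and $\operatorname{sign}F(-\infty)=(-1)^d$, a root of $F$ in $(c_{d-1},\infty)$ exists if and only if $a_{d-1}>0$, and the same condition governs the root in $(-\infty,c_1)$. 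So your leading-behaviour comparison produces both outer roots when $a_{d-1}>0$ and \emph{neither} when $a_{d-1}<0$, in which case $F$ would have exactly $d-2$ real roots and one complex pair; nothing you state excludes this. Interlacing cannot exclude it: by Proposition \ref{Prop:interlace} interlacing is invariant under $H\mapsto -H$, so it carries no information correlating the sign of $a_{d-1}$ with the cyclic position of the extra root of $H$. (In the opposite configuration the needed condition is $a_{d-1}<0$; e.g.\ for $F=(x-1)(x-2)(x-3)$ one has $a_{d-1}=-6$ and $H=-6x^2+22x-18$, whose extra root lies to the \emph{left} of $c_1$.)

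The missing idea --- which repairs the proof and makes the leading coefficient of $H$ irrelevant --- is to use that $F'$ is the actual derivative of $F$, so that $F$ is strictly monotone between consecutive critical points. Your interlacing step gives that consecutive critical values $F(c_i)$, $F(c_{i+1})$ have opposite signs; if some local minimum value were positive, the adjacent local maximum value would be negative, and $F$ would then decrease from a negative value to a positive one on the intervening interval, which is absurd. Hence every local maximum value is positive and every local minimum value is negative; in particular $F(c_{d-1})<0$, giving a root in $(c_{d-1},\infty)$, and $\operatorname{sign}F(c_1)=(-1)^{d-1}\neq(-1)^d=\operatorname{sign}F(-\infty)$, giving a root in $(-\infty,c_1)$, for $d$ real roots in all. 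This is also exactly where $d\geq 3$ enters, as you suspected: one needs at least two critical points. Your multiplicity worries can be handled through the Euler relation itself: a common root $c$ of $F'$ and $H$ satisfies $dF(c)=cF'(c)+H(c)=0$, so it is a multiple root of $F$; factor such roots out and induct. With these repairs your argument is correct, and it is more elementary than the paper's homotopy argument, though the paper's method has the advantage of handling multiple and degenerate roots without a separate reduction.
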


The proof will rely on a homotopy argument. We first observe the following.
\begin{Rem}\label{Rem:zeros}
Let $f$ be a real binary form of degree $d$.
\begin{enumerate}[(a)]
\item If $\partial_x f(a,b) = 0$ and $\partial_y f(a,b) = 0$, then $f(a,b) = 0$ because $d\cdot f = x \partial_x f + y \partial_y f$.
\item Suppose $f(a,b) = 0$, $\partial_x f(a,b) = 0$, $(bx-ay)^m$ divides $\partial_x f$, and $b\neq 0$. Then $(bx-ay)^{m+1}$ divides $f$, which follows from Taylor expansion of $f(x,1)$ around $a/b$. 
\end{enumerate}
\end{Rem}

\begin{proof}[Proof of Proposition \ref{Prop:hyperbolic}]
Let $f$ be a hyperbolic form and $v\in\R^2$. After a change of coordinates and dehomogenizing, we can assume that $f$ is a polynomial in one variable of degree $d$ with only real roots and that the directional derivative $\dd_v f$ is the usual derivative $f'$. Then the derivative has $d-1$ real roots by Rolle's Theorem; so it is hyperbolic.

Conversely, suppose all directional derivatives $\dd_v f$ have only real roots and $f$ has at least one complex root. First note that all complex roots of $f$ must be simple because a double root of $f$ is a root of every directional derivative $\dd_v f$. We will perturb $f$ to arrive at a contradiction: First, we show that $f$ can have at most one complex conjugate pair of complex roots. After a change of coordinates, if necessary, let $(z_1,1)$ and $(z_2,1)$ be two distinct complex roots of $f$ with $z_1,z_2\in \C$ and $\ol{z_1}\neq z_2$. Then $f = (x- z_1 y)(x-\ol{z_1}y)(x-z_2 y)(x-\ol{z_2}y)\wt{f}(x,y)$. Assume that roots $(x,y)$ of $\wt{f}$ satisfy $y\neq 0$. Let
\[
p_t = (x-(ti + (1-t)z_1) y)(x-(ti + (1-t)z_2 y)
\]
be the binary form with roots $(ti + (1-t)z_1,1)$ and $(ti + (1-t)z_2,1)$ and set $f_t = p_t \ol{p_t} \wt{f}(x,y)$. Then $f_0 = f$ and $f_1$ has a double root at $i$ and $-i$. All directional derivatives of $f_0$ have only real roots, which means that $\partial_x f$ and $\partial_y f$ interlace. The same is true for the directional derivatives of $f_t$ for all $t\in[0,1]$: The partial derivatives $\partial_x f_t$ and $\partial_y f_t$ interlace because their zeros cannot come together. If they did, this would force a multiple real root of $f$ by Remark \ref{Rem:zeros} because all roots $(x,y)$ of $f_t$ satisfy $y\neq 0$. Now $f_1$ has a double complex root and $\partial_x f_1$ and $\partial_y f_1$ have only real roots, a contradiction.

Next we show that $f$ cannot have a pair of complex conjugate roots by the same idea: After a change of coordinates, we can factor $f$ as
\[
f = (x-z_1 y)(x-\ol{z_1}y)(x-a_1y)\ldots(x-a_{d-2}y),
\]
where $z_1\in \C$ and $a_1,\ldots,a_{d-2}\in\R$. We use the homotopy
\[
f_t = (x-(ti + (1-t)z_1) y)(x-(-ti + (1-t)\ol{z_1}) y)(x-(1-t)a_1 y)\ldots (x-(1-t)a_{d-2}y).
\]
Then $f_0 = f$ and $f_1= x^{d-2}(x^2+y^2)$. Since the partial derivatives of $f$ interlace, the same is true for the partial derivatives of $f_1$ by Remark \ref{Rem:zeros} by the same argument as before. In particular, all their roots have to be real, but this is false, because
\[
\partial_x f_1 = x^{d-3}(dx^2 + (d-2)y^2)
\]
has a pair of complex conjugate roots ($d\geq 3$). So we have shown that all roots of $f$ are real, given that all its directional derivatives have only real roots.
\end{proof}

We need the following application of the Apolarity Lemma, see \cite[Chapter I]{IarKanMR1735271}.
\begin{Lem}\label{Lem:apolar}
Let $h\in\R[x,y]_{d-1}$ be a binary form of degree $d-1\geq 2$. Suppose that the real Waring rank of $h$ is at most $d-2$ and that every decomposition of $h$ into the sum of at most $d-2$ powers of linear forms uses the power $y^{d-1}$. Then $h$ is equal to $y^{d-1}$ up to scaling or $h = a y^{d-1} + b \ell^{d-1}$ for some $a,b\in \R$ and a linear form $\ell\in\R[x,y]$.
\end{Lem}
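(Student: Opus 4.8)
The plan is to pass everything through the Apolarity Lemma \cite[Chapter I]{IarKanMR1735271} and reduce the statement to a rank condition on the partial derivative $\partial_x h$. Work in the dual ring $S = \R[\partial_x,\partial_y]$ acting on $\R[x,y]$ by differentiation, and let $h^\perp\subseteq S$ be the apolar ideal; it is a complete intersection $(g_1,g_2)$ with $\deg g_1+\deg g_2 = d+1$. The dictionary I will use is: a real Waring decomposition of $h$ of length $m$ corresponds to a squarefree real form $g\in (h^\perp)_m$ whose $m$ distinct real roots are the linear forms appearing, and the decomposition uses the summand $y^{d-1}$ precisely when $\partial_x$ (the operator dual to the linear form $y$) divides $g$. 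I also record the two apolarity identities that drive the argument, $h^\perp\subseteq (\partial_x h)^\perp$ and $(h^\perp:\partial_x) = (\partial_x h)^\perp$, both of which follow degreewise from $(\partial_x D)(h) = D(\partial_x h)$. The goal then becomes: the lemma holds as soon as I show the hypotheses force $\partial_x h$ to have real Waring rank at most $1$, i.e. $\partial_x h$ is $0$ or a power $c\ell^{d-2}$. Indeed, integrating $\partial_x h = c\ell^{d-2}$ in $x$ returns $h = b\ell^{d-1} + a y^{d-1}$ (the integration constant is forced to be a multiple of $y^{d-1}$), while $\partial_x h = 0$ gives $h = ay^{d-1}$; the only degenerate subcase, $\ell$ a multiple of $y$ with $\partial_x h\neq 0$, produces $h = y^{d-2}(c'x+ay)$ with $c'\neq 0$, whose apolar ideal has the non-reduced generator $\partial_x^2$ and hence real Waring rank $d-1>d-2$, so it is excluded by the rank hypothesis.

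First I would dispose of the case where $h$ is itself a power of a linear form: then $\partial_x h$ has rank $\leq 1$ automatically, and the hypothesis, applied to the (length $1\leq d-2$) decomposition, forces that power to be $y^{d-1}$, giving the first alternative. So assume $h$ is not a pure power, whence $(h^\perp)_1 = 0$ and $\dim_\R(S/h^\perp)_1 = 2$. The heart of the argument is a dimension count in top degree $d-2$. By Gorenstein symmetry of the Artinian algebra $S/h^\perp$ (socle degree $d-1$), $\dim (h^\perp)_{d-2} = \dim S_{d-2} - \dim(S/h^\perp)_{d-2} = (d-1)-2 = d-3$. The forms in $(h^\perp)_{d-2}$ divisible by $\partial_x$ are exactly $\partial_x\cdot\big((\partial_x h)^\perp\big)_{d-3}$, and applying Gorenstein symmetry now to $(\partial_x h)^\perp$ (socle degree $d-2$) shows this subspace has dimension $d-4$ when $\partial_x h$ is not a pure power (rank $\geq 2$), but dimension $d-3$ — all of $(h^\perp)_{d-2}$ — when $\partial_x h$ is a pure power (rank $\leq 1$). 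This is the key dichotomy: $\partial_x h$ has rank $\geq 2$ if and only if the $\partial_x$-divisible forms constitute a proper hyperplane inside $(h^\perp)_{d-2}$.

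Assuming towards a contradiction that $\partial_x h$ has rank $\geq 2$, I now produce a length-$(d-2)$ real decomposition of $h$ avoiding $y^{d-1}$, contradicting the hypothesis. Since the real Waring rank $r$ of $h$ is at most $d-2$, fix a squarefree real $g_0\in (h^\perp)_r$; the hypothesis applied to its decomposition forces $\partial_x\mid g_0$. Multiplying by a product $q$ of $d-2-r$ new distinct real linear forms (in $S$) yields a seed $A = g_0 q\in (h^\perp)_{d-2}$ with $d-2$ distinct real roots and $\partial_x\mid A$, so $A$ lies on the $\partial_x$-divisible hyperplane. Because that hyperplane is proper (the rank-$\geq 2$ side of the dichotomy), I may pick $C\in (h^\perp)_{d-2}$ with $\partial_x\nmid C$. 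Here reality is handled for free: having $d-2$ distinct real roots is an open condition, so for small $\epsilon\neq 0$ the form $g = A+\epsilon C$ still has $d-2$ distinct real roots, still lies in $(h^\perp)_{d-2}$, and satisfies $\partial_x\nmid g$. Apolarity turns the roots of $g$ into a decomposition of $h$ into $d-2$ real powers, none of them $y^{d-1}$ — the desired contradiction. Hence $\partial_x h$ has rank $\leq 1$, and the reduction of the first paragraph finishes the proof.

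The step I expect to be the main obstacle is getting the apolarity bookkeeping exactly right: pinning down the dictionary ``uses $y^{d-1}$ $\Leftrightarrow$ $\partial_x\mid g$'', and above all proving the dimension dichotomy through the colon identity $(h^\perp:\partial_x)=(\partial_x h)^\perp$ together with the Gorenstein symmetry of both $S/h^\perp$ and $S/(\partial_x h)^\perp$. Once that count is in place, the perturbation argument makes the reality requirement (all roots real) essentially automatic by openness, which is why I expect to avoid any delicate interlacing or homotopy analysis.
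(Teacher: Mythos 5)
Your proof is correct, and it shares the paper's skeleton --- pass everything through the Apolarity Lemma, use the hypothesis to see that every form in $(h^\perp)_{d-2}$ with distinct real roots is divisible by the operator dual to $y$, and exploit openness of the distinct-real-roots condition to convert that into exact linear algebra in $(h^\perp)_{d-2}$ --- but the algebraic engine you use for the final step is genuinely different. The paper quotes the Iarrobino--Kanev structure theorem ($h^\perp = (r_1,r_2)$ with $\deg r_1 + \deg r_2 = d+1$) together with Nuij's theorem, concludes that \emph{all} of $(h^\perp)_{d-2}$ is divisible by $x$, and then runs a two-case analysis on $\deg r_1 \in \{1,2\}$, where the case $\deg r_1 = 2$ yields $D_{(u,v)}\partial_x h = 0$ and hence the second alternative. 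You avoid the structure theorem entirely: the colon identity $(h^\perp : \partial_x) = (\partial_x h)^\perp$ plus Gorenstein symmetry of the Hilbert functions of $S/h^\perp$ and $S/(\partial_x h)^\perp$ gives the dichotomy ``the $\partial_x$-divisible forms fill $(h^\perp)_{d-2}$ if and only if $\partial_x h$ is a pure power,'' the perturbation $A + \epsilon C$ kills the proper-hyperplane branch, and integration in $x$ recovers the two alternatives. Your route costs a slightly longer dimension count but buys two things: it needs only the symmetry of catalecticant ranks rather than the generator structure of $h^\perp$, and it makes the reduction to $\partial_x h$ explicit, which matches exactly how the lemma is consumed in the proof of Theorem \ref{Thm:hyperbolic} (taking antiderivatives). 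A further small plus: you explicitly dispose of the degenerate subcase $\partial_x h = c\,y^{d-2}$, i.e.\ $h = y^{d-2}(c'x + ay)$, by observing it contradicts the rank hypothesis; the paper's case $r_1 = x\ell_\perp$ tacitly assumes $\ell_\perp$ is not proportional to $x$ (otherwise its stated conclusion $h = ay^{d-1} + b\ell^{d-1}$ with $\ell = -vx+uy$ fails), and there too one must invoke the rank hypothesis, so your version is, if anything, more complete on this point.
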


\begin{proof}
The apolar ideal $h^\perp\subset\R[x,y]$ of $h$ is generated by two polynomials $r_1,r_2\in\R[x,y]$ such that $\deg(r_1) + \deg(r_2) = d+1$, see \cite[Theorem 1.44(iv)]{IarKanMR1735271}. We assume $\deg(r_1)\leq \deg(r_2)$. The fact that every decomposition of $h$ as the sum of at most $d-2$ powers of real linear forms uses $y^{d-1}$ is equivalent to the fact that every polynomial in $(h^\perp)_{d-2}$ with only real roots is divisible by $x$ by the Apolarity Lemma, \cite[Lemma 1.31]{IarKanMR1735271}.
The set of hyperbolic polynomials has non-empty interior in $\R[x,y]_{d-2}$ (see \cite{NuijMR0250128}), which implies that every polynomial in $(h^\perp)_{d-2}$ is divisible by $x$. In particular, $\deg(r_2)\geq d-1$, because $r_1$ and $r_2$ have no common zeros. 
This leaves the two cases $\deg(r_1)=1$, so that $\deg(r_2) = d$, or $\deg(r_1) = 2$, so that $\deg(r_2) = d-1$.
In case $\deg(r_1) = 1$, $r_1$ must be equal to $x$ up to scaling. By the Apolarity Lemma, $h$ has real Waring rank $1$ in this case and is equal to $y^{d-1}$ up to scaling.
In case $\deg(r_2) = 2$, there is a linear form $\ell_\perp = u x + v y$ such that $r_1 = x \ell_\perp$. In this case, $D_{(u,v)} \partial_x h = 0$, which shows that $h$ is equal to $a y^{d-1} + b \ell^{d-1}$, where $\ell = -v x + u y$, for some $a,b\in\R$.
\end{proof}

\begin{Exm}
Consider the binary form $f = y^{d-1}(ax + by) + c \ell^d$, $d\geq 4$, for a linear form $\ell\in\R[x,y]$ and $a,b,c\in\R$. Then the real Waring rank of $f$ is at most $d-1$ or $f$ is hyperbolic: By the Structure Theorem, the apolar ideal of $f$ is generated by two polynomials
\[
f^\perp = \langle r_1,r_2 \rangle
\]
with $\deg(r_1) + \deg(r_2) = d+2$, see \cite[Theorem 1.44(iv)]{IarKanMR1735271}. Clearly, the apolar ideal contains the form $x^2\ell_\perp$, where $\ell_\perp$ is the linear form that is apolar to $\ell$. Now there are three cases: If $f^\perp$ contains a linear form, then $f = by^d$ or $c\ell^d$, i.e.~the Waring rank of $f$ is $1$.
Suppose $f^\perp$ contains a form $q$ of degree $2$, then $q$ divides $x^2 \ell_\perp$, because $\deg(r_2) = d \geq 4$. Therefore, $q = x\ell_\perp$ or $q = x^2$.
If $q = x^2$, then $f = y^{d-1}(ax+by)$ is hyperbolic. If $q = x\ell_\perp$, then the real Waring rank of $f$ is $2$ by the Apolarity Lemma.

Thirdly, if $f^\perp$ does not contain a quadric, then $r_1 = x^2 \ell_\perp$ and $\deg(r_2) = d-1\geq 3$. Now choose generic linear forms $\ell_1,\ldots,\ell_{d-4}$ such that $r = r_1 \cdot \ell_1 \cdot\ldots\cdot \ell_{d-4}$ has only distinct roots except for $(0:1)$, which is a double root of $r_1$. By perturbing $r$ with a small multiple of $r_2$, we obtain a polynomial in $f^\perp$ with distinct real roots of degree $d-1$. By the Apolarity Lemma, this implies that the real Waring rank of $f$ is at most $d-1$.
\end{Exm}

We are now ready to prove that all binary forms of maximal real Waring rank and degree at least $3$  are hyperbolic.
\begin{proof}[Proof of Theorem \ref{Thm:hyperbolic}]
We prove the statement by induction on the degree of $f$:
The base case $d=3$ was done by Reznick, see \cite[Theorem 5.2]{BruceMR3156559}.

So let $f\in\R[x,y]_d$ be a hyperbolic binary form of degree $d>3$. Then every directional derivative of $f$ is a hyperbolic form of degree $d-1$. By induction, they all have real Waring rank $d-1$. If the Waring rank of $f$ was $d-1$, then there would be a directional derivative with real Waring rank at most $d-2$. So $f$ must have real Waring rank $d$. For a different proof of this direction, see \cite{BruceMR3156559}.

On the other hand, let $f$ be a binary form of degree $d>3$ and suppose that the real Waring rank of $f$ is $d$. If, after a change of coordinates, $f = y^{d-1}\ell$ or $f = y^{d-1}(ax+by) + c \ell^d$ for some linear form $\ell\in\R[x,y]$ and $a,b,c\in\R$, then $f$ is hyperbolic (see the preceding Example for the second case).
Otherwise, every directional derivative of $f$ has real Waring rank $d-1$. Indeed, if that were false, we can assume after a change of coordinates that $\partial_x f$ has Waring rank at most $d-2$.
By Lemma \ref{Lem:apolar}, there is a decomposition
\[
\partial_x f = \sum_{i=1}^{d-2} \alpha_i \ell_i^{d-1},
\]
where the coefficient $a_i$ of $x$ in all linear forms $\ell_i = a_i x + b_i y$ is nonzero, because $\partial_x f$ is not equal to $ay^{d-1} + c\ell^{d-1}$. By taking antiderivatives, we get
\[
f = \alpha y^d + \sum_{i=1}^{d-2}\frac{\alpha_i}{d a_i} \ell_i^d,
\]
for some $\alpha\in\R$. This equation implies that the real Waring rank of $f$ is at most $d-1$. So every directional derivative of $f$, which has degree $d-1$, has real Waring rank $d-1$.
Therefore, by induction, every directional derivative of $f$ is hyperbolic. And then so is $f$, by Proposition \ref{Prop:hyperbolic}.
\end{proof}

We can use this result on the real Waring rank of binary forms to construct a curve such that the gap between the minimal typical real rank and the maximal typical rank is as large as possible. We construct this curve by projecting the rational normal curve.
\begin{Thm}\label{Thm:maxtypicalrank}
Let $p \in\R[x,y]_d$ be a binary hyperbolic form with distinct real roots. Let $X\subset\P^{d-1}$ be the projection of the rational normal curve (embedded into $\P(\C[x,y]_d)$ as the $d$-th powers of linear forms) from the point $p$. Then the minimal typical real rank with respect to $X$ is $\lceil d/2 \rceil$, whereas the maximal typical real rank with respect to $X$ is $d$.
\end{Thm}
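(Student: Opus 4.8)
The plan is to reduce every statement to the behavior of the real Waring rank along a pencil of binary forms. Set $V = \R[x,y]_d$, let $W = V/\langle p\rangle$, and note that $\pi_p$ is induced by the quotient $V \to W$, so that a point of $\P(W)$ is a line through $p$ in $\P(V)$. Working in the affine cone, a class $[\bar q]\in\P(W)(\R)$ has a decomposition into $r$ real points of $\pi_p(v_d(\P^1))$ exactly when $\alpha q + \beta p$ is a sum of $r$ real $d$-th powers for some representative $q$ and some $(\alpha:\beta)$. Hence
\[
\rrk_X([\bar q]) = \min\bigl\{\rrk_{v_d(\P^1)}(\alpha q + \beta p) : (\alpha:\beta)\in\P^1(\R)\bigr\},
\]
the minimum of the real Waring rank over the pencil $\langle q,p\rangle$; the same identity holds over $\C$. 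First I would prove this reformulation carefully, checking that the pencil depends only on $[\bar q]$ and that the rank is scale invariant. Throughout I take $d\geq 3$, so that Theorem \ref{Thm:hyperbolic} applies.

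For the minimal typical rank I would invoke the Blekherman--Teitler equality $r_0 = r_{\mathrm{gen}}$ and compute the generic complex $X$-rank, which by the reformulation is the minimum of the complex Waring rank along a general line through $p$. Since $p$ is hyperbolic with distinct roots it has maximal Waring rank $d$, so it lies on no secant variety $\sigma_r(v_d(\P^1))$ with $r$ small, and a general line through $p$ meets these secant varieties in the expected dimension. A dimension count then yields $\lceil d/2\rceil$: for $d$ even the rank-$(d/2)$ locus is the catalecticant hypersurface, which a general line meets, dropping the generic rank from $d/2+1$ to $d/2$, while the rank-$(d/2-1)$ locus has codimension at least two and is missed; for $d$ odd the rank-$((d-1)/2)$ locus has codimension two, so a general line misses it and the generic rank remains $(d+1)/2 = \lceil d/2\rceil$. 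In both cases $r_0 = \lceil d/2\rceil$.

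For the maximal typical rank, the reformulation shows that $\rrk_X([\bar q]) = d$ precisely when every member of the pencil $\langle q,p\rangle$ has real Waring rank $d$; since the real Waring rank of a degree-$d$ binary form never exceeds $d$ (Remark \ref{Rem:Ballicobound}), this is the condition that every member is hyperbolic and not a $d$-th power, by Theorem \ref{Thm:hyperbolic}. By Proposition \ref{Prop:interlace}, $\alpha q + \beta p$ is hyperbolic for all real $(\alpha,\beta)$ if and only if $q$ and $p$ interlace. The interlacing locus in $\P(V)(\R)$ is nonempty---choose one point in each of the $d$ open arcs of $\P^1(\R)$ cut out by the roots of $p$ and let $q$ be the product of the corresponding real linear forms---and it is open, since $p$ has $d$ distinct real roots and a small perturbation of an interlacing form keeps its roots real and in the prescribed arcs. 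Because $\pi_p$ is an open map on $\P(V)(\R)\setminus\{p\}$, the image of this locus is a nonempty open subset of $\P(W)(\R)$; after deleting the nowhere-dense closed subset where the pencil meets $v_d(\P^1)$, no member is a $d$-th power. Every remaining class $[\bar q]$ has real $X$-rank $d = \codim X + 2$, which is maximal by Theorem \ref{Thm:UpperBound}; hence $d$ is a typical rank.

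The main obstacle is the maximal-rank half: upgrading ``every member of the pencil has maximal Waring rank'' into an open, checkable geometric condition. This is where the two key inputs combine---Theorem \ref{Thm:hyperbolic} identifies maximal rank with hyperbolicity, and Proposition \ref{Prop:interlace} identifies pencils of hyperbolic forms with interlacing pairs---after which the remaining work is to see that interlacing is an open condition that persists under the projection $\pi_p$. By contrast, the minimal-rank half is a routine secant-variety dimension count once the pencil reformulation and the Blekherman--Teitler equality are in hand.
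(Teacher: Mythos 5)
Your strategy coincides with the paper's: the minimal typical rank via the Blekherman--Teitler equality $r_0=r_{\mathrm{gen}}$ plus a secant-variety dimension count, and the maximal typical rank via Theorem \ref{Thm:hyperbolic} combined with Proposition \ref{Prop:interlace} and openness of interlacing; the paper's proof is exactly this, with your ``pencil identity'' replaced by a citation of Proposition \ref{Prop:projections}. The minimal-rank half of your argument is fine, because there you only need the pencil identity over $\C$, where it does hold (every point of $X$ has a complex preimage on the curve, and a complex decomposition downstairs lifts to a pencil member). The gap is in the real pencil identity, which you assert unconditionally and on which your maximal-rank argument rests.

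The inequality $\rrk_X([\bar q]) \leq \min\bigl\{\rrk_{v_d(\P^1)}(\alpha q+\beta p)\bigr\}$ is Proposition \ref{Prop:projections}; the reverse inequality --- the one you need, since a short real decomposition downstairs must force a short one in the pencil --- requires that every real point of $X$ have a \emph{real} preimage on the rational normal curve. This is exactly the hypothesis of the equality case of Proposition \ref{Prop:projections}, and it can fail under the hypotheses of the theorem: it fails precisely when $p$ lies on a secant line spanned by two complex-conjugate points of the curve, i.e.\ $p=\lambda\ell^d+\ol{\lambda}\,\ol{\ell}^d$ with $\ell$ non-real. Such $p$ exist for every $d$, e.g.\ $p=(x+iy)^d+(x-iy)^d$, which equals $2r^d\cos(d\theta)$ in polar coordinates and is hyperbolic with $d$ distinct real roots. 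For this $p$ the point $a=\pi_p(\ell^d)=\pi_p(\ol{\ell}^d)$ is a real point of $X$ (an acnode) whose only preimages on the curve are non-real, and decompositions through $a$ are invisible to the pencil. Your identity then fails badly: the real form $R$ proportional to $r^d\sin(d\theta)$ interlaces $p$, so the right-hand side of your identity for $[\bar R]$ is $d$, yet $[\bar R]=a\in X(\R)$ has $X$-rank $1$. Worse, for $d=3$ \emph{every} hyperbolic form with distinct roots has complex rank $2$, and there the conclusion itself breaks down: $X$ is an acnodal plane cubic, and every $z\in\P^2(\R)$ has $X$-rank at most $2$, since the line through $z$ and the acnode meets $X$ in a third point which is automatically real. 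So the checks you flag (well-definedness of the pencil, scale invariance) are not where the difficulty lies; the missing step is the lifting of real points of $X$ to real points of the curve, and it is not provable in general. The argument (yours, and for that matter the paper's, which invokes the equality case of Proposition \ref{Prop:projections} without verifying its hypothesis) goes through once one additionally assumes that the complex Waring rank of $p$ is at least $3$ --- which forces $d\geq 4$ and excludes the Chebyshev-type forms above. Under that assumption $\pi_p$ is injective on the curve, so conjugation-equivariance shows the unique preimage of a real point of $X$ is real, and the rest of your proof is sound.
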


\begin{proof}
The minimal typical real rank with respect to $X\subset\P^{d-1}$ is equal to the generic complex rank, which is $\lceil d/2\rceil$ by counting the dimension of the secant varieties of $X$. Indeed,
\[
\dim(S_{\left( \lceil d/2 \rceil -1 \right)}X) = 2\lceil d/2\rceil - 1 \geq d-1 
\]
The fact that the maximal typical real rank is $d$ follows from our above results on the Waring rank: Let $q\in\R[x,y]_d$ be any hyperbolic binary form that interlaces $p$. Then every form in the span of $p$ and $q$ is hyperbolic by Proposition \ref{Prop:interlace}. So the image of $q$ under the projection from $p$ is a real point of rank $d$ with respect to $X$ by Proposition \ref{Prop:projections} and Theorem \ref{Thm:hyperbolic}. As the set of interlacers of $p$ is open in the Euclidean topology, this gives an open set of real points in $\P^{d-1}$ of rank $d$.
\end{proof}

\begin{Rem}
In fact, in our construction in the proof of Theorem \ref{Thm:maxtypicalrank}, the set of all points of real $X$-rank $d$ is the projection of the set of all interlacing polynomials of the fixed strictly hyperbolic polynomial $p$, which is a convex cone, cf.~\cite[Corollary 2.7]{cynmardan}. In particular, the set of points of real $X$-rank $d$ is connected. By choosing a form $p$ with repeated roots, we can make the dimension of the set of points of real $X$-rank $d$ smaller; e.g.~if we set $p = x^{d-1}y$, then the dimension of bivariate forms interlacing $p$ is $2$ (projectively). 
\end{Rem}

\begin{Cor}\label{cor:gapcomplexrank}
Let $p \in \R[x,y]_d$ be a generic binary hyperbolic form with distinct real roots and let $X\subset \P^{d-1}$ be the projection of the rational normal curve as above. Then the maximal complex $X$-rank is at most $\lceil (d+1)/2 \rceil$, whereas the maximal real rank is $d$.
\end{Cor}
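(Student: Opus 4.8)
The plan is to prove the two assertions separately: the real rank via the results already in hand, and the complex rank by a short genericity argument built on the projection formula. Throughout, $X\subset\P^{d-1}$ denotes the projected curve and $\pi_p\colon\P^d\ratto\P^{d-1}$ the projection from $p$, where $\P^d=\P(\C[x,y]_d)$ contains the rational normal curve $v_d(\P^1)$.

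For the maximal real rank I would argue as follows. The curve $X$ is irreducible and nondegenerate in $\P^{d-1}$ (the image of the nondegenerate $v_d(\P^1)$ under a projection from one point still spans $\P^{d-1}$) and, for the generic $p$ we are allowed to choose, it has a regular real point, so Theorem \ref{Thm:UpperBound} applies and gives $\rrk_X(\bar q)\le\codim(X)+2=(d-2)+2=d$ for every real point $\bar q$. Conversely, the construction in the proof of Theorem \ref{Thm:maxtypicalrank} already exhibits an open set of real points of $X$-rank exactly $d$, namely the images of the hyperbolic forms interlacing $p$. Hence the maximal real rank equals $d$.

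For the maximal complex rank, write $g=\lceil(d+1)/2\rceil$ for the generic complex Waring rank of binary forms of degree $d$; concretely $\sigma_g(v_d(\P^1))=\P^d$ while $\sigma_{g-1}(v_d(\P^1))\subsetneq\P^d$, exactly the secant computation used in the proof of Theorem \ref{Thm:maxtypicalrank}. The projection argument of Proposition \ref{Prop:projections} applies verbatim over $\C$ (algebraic closedness only simplifies it), so for every $\bar q\in\P^{d-1}(\C)$ and every lift $q'\in\P^d(\C)$ of $\bar q$ one has $\operatorname{rank}_X(\bar q)\le\operatorname{rank}_{v_d(\P^1)}(q')$, the latter being the complex Waring rank of the form $q'$. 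The lifts of $\bar q$ are precisely the points of the line $\langle p,q\rangle\setminus\{p\}$, i.e.\ the forms $q-\lambda p$ with $\lambda\in\C$. Thus it suffices to show that every line through $p$ meets the locus $\{\operatorname{rank}_{v_d(\P^1)}\le g\}$. Let $R_{>g}\subset\P^d$ be the set of forms of complex rank strictly larger than $g$ and $\overline{R_{>g}}$ its Zariski closure. Since the $g$-th secant map is dominant, the forms of rank at most $g$ contain a dense open subset of $\P^d$, so $\overline{R_{>g}}$ is a \emph{proper} closed subvariety. Now I would choose $p$ generic: a generic binary form has complex rank $g$, and a generic hyperbolic form with distinct real roots is Zariski-generic in $\P^d$, so we may insist that $p\notin\overline{R_{>g}}$. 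For such $p$ no line through $p$ can lie in $\overline{R_{>g}}$ (that would force $p\in\overline{R_{>g}}$), so every line $\langle p,q\rangle$ meets the cofinite complement $\P^d\setminus\overline{R_{>g}}\subseteq\{\operatorname{rank}_{v_d(\P^1)}\le g\}$; picking a point $q'\ne p$ there yields $\operatorname{rank}_X(\bar q)\le\operatorname{rank}_{v_d(\P^1)}(q')\le g$. As $\bar q$ is arbitrary, the maximal complex $X$-rank is at most $g=\lceil(d+1)/2\rceil$.

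The conceptual heart of the argument, and the only place the genericity of $p$ enters, is the separation of the real and complex rank of $p$ itself: as a hyperbolic form $p$ has \emph{real} rank $d$ by Theorem \ref{Thm:hyperbolic}, yet its \emph{complex} rank is the generic value $g$, so $p$ can be placed in the complement of the high-complex-rank locus $\overline{R_{>g}}$. The step I expect to require the most care is therefore the verification that $\overline{R_{>g}}$ is a proper subvariety whose complement still meets the (Euclidean-open, hence Zariski-dense) family of hyperbolic forms with distinct roots; both are immediate once the generic complex rank is pinned down as $g$, but they are exactly what makes the gap $d-\lceil(d+1)/2\rceil$ arbitrarily large.
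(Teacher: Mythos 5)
Your proposal is correct and takes essentially the same route as the paper: the paper's own proof is exactly the secant dimension count giving generic complex rank $\lceil (d+1)/2\rceil$ plus Proposition \ref{Prop:projections} applied to the projection from $p$, with the real-rank statement coming from Theorem \ref{Thm:maxtypicalrank} as you say. Your extra step — showing the locus of complex rank $>\lceil (d+1)/2\rceil$ has proper Zariski closure, that a generic hyperbolic $p$ avoids this closure, and hence that every line through $p$ contains lifts of rank at most $\lceil (d+1)/2\rceil$ — is precisely the rigorous filling-in of what the paper compresses into ``if we project from a point $p$ with this rank,'' and it correctly identifies that genericity of $p$ (not merely its having rank $\lceil (d+1)/2\rceil$) is what makes the argument go through, a point the paper defers to its subsequent Remark citing Comas--Seiguer.
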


\begin{proof}
The generic complex rank with respect to the rational normal curve $C\subset\P^d$ is $\lceil (d+1)/2 \rceil$ by count of dimensions of the secant varieties. So if we project from a point $p$ with this rank, then the maximal rank in the image $\P^{d-1}$ is at most this rank $\lceil (d+1)/2 \rceil$ by Proposition \ref{Prop:projections}.
\end{proof}

\begin{Rem}
The condition on the polynomial to be generic in the above corollary is well understood. It is different for odd and even degrees. If $d$ is even, then the condition is that the Hankel matrix (middle Catalecticant) of $p$ is non-singular. If $d$ is odd, the condition is slightly more complicated. See Comas-Seiguer \cite[section 3]{CS} for details.
\end{Rem}

\section{Plane Curves}\label{sec:planecurves}
In case of plane curves, we have $\dim(X) = 1$ and $n=2$, so the maximal real rank of any point is $3$, the trivial bound. This bound is tight. Our goal is to show that regions of points of rank $3$ are bounded by real flex lines to the curve, given that they have non-empty interior. 

The first observation is that the degree of the curve $X$ must be odd if there is a point of real rank $3$.
\begin{Rem}
Given an irreducible nondegenerate plane curve $X\subset\P^2$ with regular real point and a point $p\in\P^2(\R)$ with $\rrk(p) = 3$, the degree of $X$ must be odd and $X(\R)$ has no ovals: If $X(\R)$ has an oval, then the set of lines through the interior of that oval cover $\P^2(\R)$ and intersect the oval in two real points. 

For the real topology of $X$, this means that $X(\R)$ is a pseudoline, i.e.~$X(\R)$ is connected and $\P^2(\R)\setminus X(\R)$ is also connected.
\end{Rem}

\begin{Thm}\label{Thm:PlaneCurves}
Let $X\subset\P^2$ be an irreducible nondegenerate curve and assume that $X(\R)$ is non-empty and contains only regular points of $X$. Let $\sH\subset\P^2(\R)$ be the union of all tangents to a real point of $X$ that meet $X$ at odd order $\geq 3$. Then the set of points in $\P^2(\R)\setminus \sH$ of real $X$-rank $3$ is a union of connected components of $\P^2(\R)\setminus \sH$ (possibly empty). In other words, the regions of points of real $X$-rank $3$ with non-empty interior are bounded by odd order tangents.
\end{Thm}

\begin{proof}
We show that the set of points of real $X$-rank $3$ in $\P^2(\R)\setminus \sH = : \sH^c$ is open and closed:

A point $p\in\RP[2]$ has real $X$-rank $3$ if and only if every line through $p$ intersects $X(\R)$ in at most $1$ point. As a condition in the line, this is open because $X(\R)$ is compact, as long as we stay away from odd order tangents. By compactness of the dual projective space $\DP[2](\R)$, this also translates into an open condition on the point $p$ by duality.

To see that the regions of points of real $X$-rank $3$ are closed in $\sH^c$, we show that the complement is open. A point $q\in \sH^c$ of real $X$-rank less than $3$ either has rank $2$ or $q\in X(\R)$. Suppose $\rrk_X(q)=2$, then there is a line through $q$ that intersects $X(\R)$ in $2$ distinct points. We can assume that this line is not tangent to $X$ at any real point (by choosing a generic line through $q$). Since $X(\R)$ has no singular points, we cover an open neighbourhood of $q$ by varying one of these $2$ points in $X(\R)$. If $q\in X(\R)$, then we can find a line intersecting $X(\R)$ transversally in $q$ and in at least $2$ more real points because $X$ has odd degree. Using the same argument as for the rank $2$ case, we see that all points in a neighbourhood of $q$ (in $\sH^c$) have real $X$-rank at most $2$.
\end{proof}
\begin{figure}[b]
\centering
\includegraphics[scale=0.8]{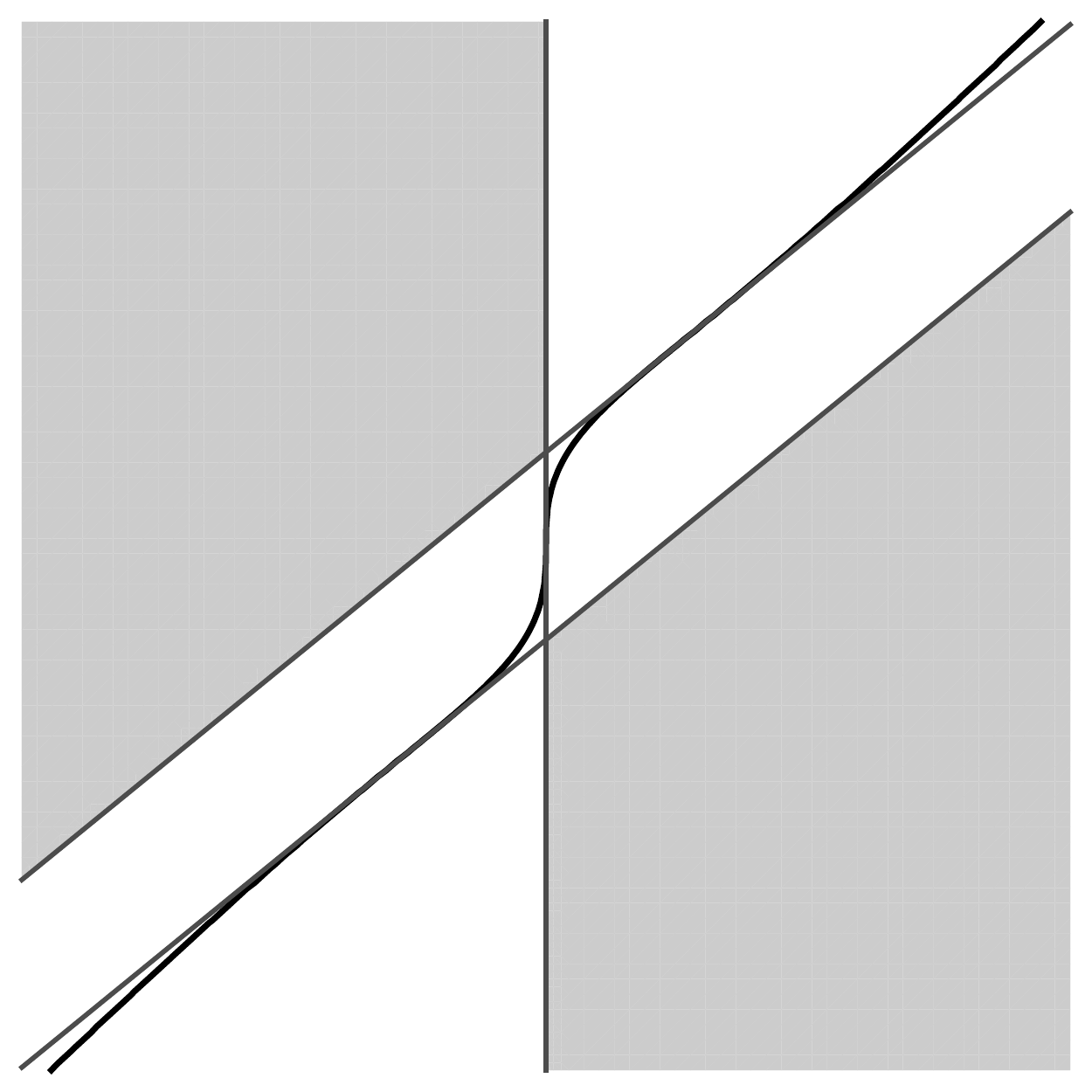}
\caption{A plane cubic, its $3$ real flex lines, and the triangular region of points of real rank $3$.}
\label{fig:deg3}
\end{figure}

\begin{Rem}
\begin{enumerate}[(a)]
\item For most curves, the only odd order tangents with an order of tangency at least $3$ are flex lines, i.e.~tangents to $X$ at inflection points. This is the case for all Pl\"ucker curves. In this case, the only singularities of the dual plane curve are nodes and cusps.
\item In particular, we have seen in the proof of the above theorem, that all regions in $\RP[2]\setminus \sH$ that meet $X(\R)$ are regions of rank $2$ points.
\end{enumerate}
\end{Rem}

\begin{Exm}
\begin{enumerate}[(a)]
\item Consider the plane cubic $X = \V(x_2^2x_0 - (x_1^2+x_0^2)(x_1-x_0))$. This cubic has only one connected component in $\RP[2]$ and three real flex lines. They bound a triangle. The points in the interior of this triangle have real rank $3$ with respect to $X$, see Figure \ref{fig:deg3}.
\item Consider the plane curve $X = \V(x_1^5-x_1^3x_2^2 - x_0(x_0^4-19/20 x_0^2 x_2^2+x_2^4))$ of degree $5$. It has $5$ real flex lines. The region of points of rank $3$ is connected with non-empty interior and is bounded by four flex lines. It is shown in Figure \ref{fig:deg5}.
\begin{figure}[h]
\centering
\includegraphics[scale=0.8]{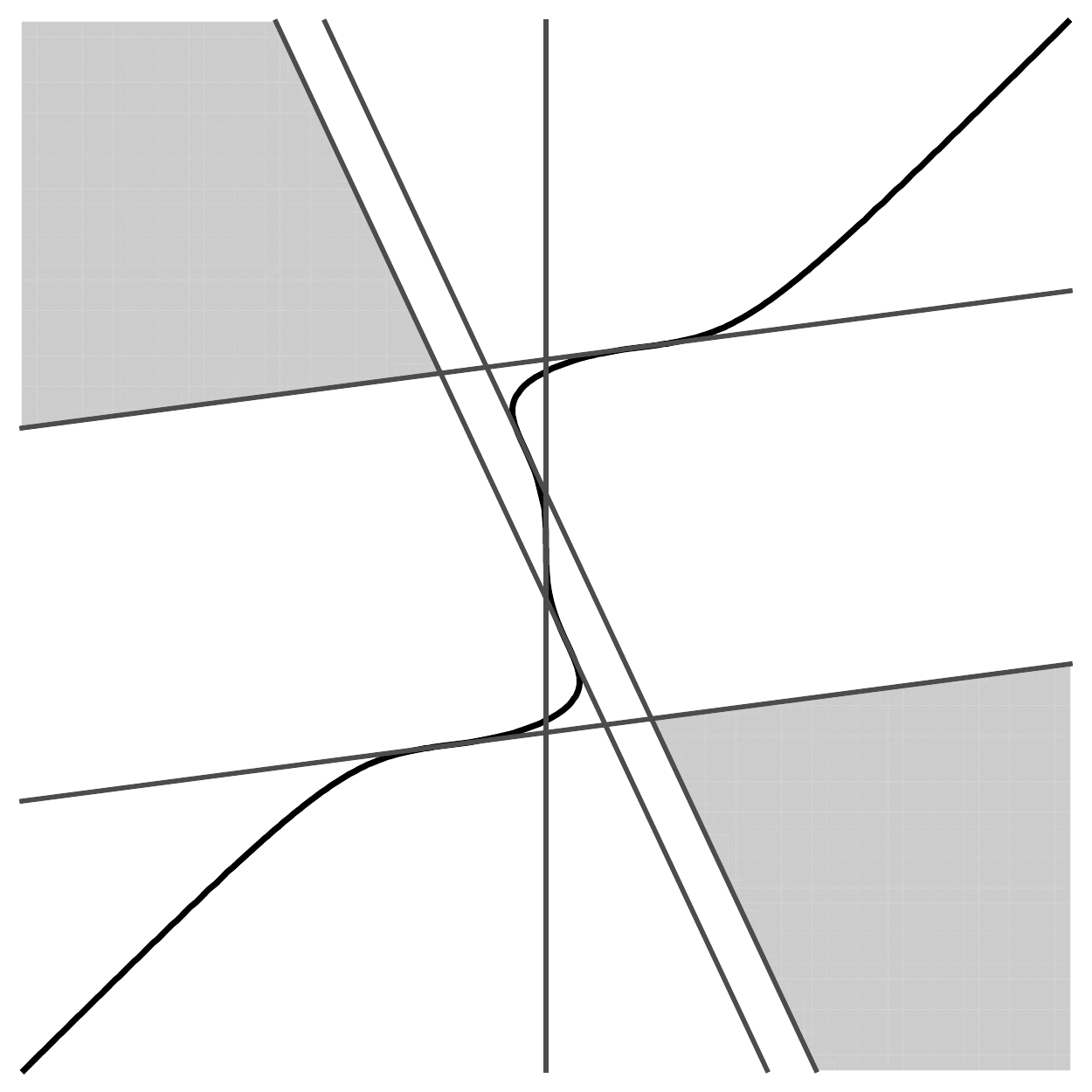}
\caption{A curve of degree $5$, its $5$ real flex lines, and the region of points of real rank $3$.}
\label{fig:deg5}
\end{figure}
\end{enumerate}
\end{Exm}

\end{document}